\numberwithin{equation}{section}
\newtheorem{theorem}{Theorem}[section]
\newtheorem{lemma}[theorem]{Lemma}
\newtheorem{proposition}[theorem]{Proposition}
\newtheorem{corollary}[theorem]{Corollary}
\theoremstyle{definition}
\newtheorem{definition}[theorem]{Definition}
\newtheorem{example}[theorem]{Example}
\def\lm{\textrm{lm}}
\begin{document}


\title[]{Projective Nested Cartesian Codes}

\thanks{The first and the second authors are partially supported  by  CNPq  and by FAPEMIG. 
The third author was partially supported by CONACyT and Universidad Aut\'onoma 
de Aguascalientes\\
To be published in the 
Bulletin of the Brazilian Mathematical Society, New Series. 
The final publication is available at Springer via 
	http://dx.doi.org/doi:10.1007/s00574-016-0010-z
}

\author{C\'icero Carvalho}
\address{
Faculdade de Matem\'atica\\
Universidade Federal de Uberl\^andia\\
Av. J. N. \'Avila 2121\\
38.408-902 - Uberl\^andia - MG, Brazil
}
\email{cicero@ufu.br}

\author{V.\ G.\ Lopez Neumann}
\address{
Faculdade de Matem\'atica\\
Universidade Federal de Uberl\^andia\\
Av. J. N. \'Avila 2121\\
38.408-902 - Uberl\^andia - MG, Brazil
}
\email{victor.neumann@ufu.br}

\author{Hiram H. L\'opez}
\address{
Departamento de
Matem\'aticas\\
Centro de Investigaci\'on y de Estudios
Avanzados del
IPN\\
Apartado Postal
14--740 \\
07000 Mexico City, D.F.
}
\email{hlopez@math.cinvestav.mx}

\begin{abstract} In this paper we introduce a new family of codes, called 
projective nested cartesian codes. They are obtained by the evaluation of 
homogeneous polynomials of a fixed degree on a certain subset of 
$\mathbb{P}^n(\mathbb{F}_q)$, and they may be seen as a generalization of  the 
so-called projective Reed-Muller codes. We calculate the length and the 
dimension of such codes, an upper  bound for the minimum distance and the exact 
minimum distance in a special case (which includes the projective Reed-Muller 
codes).  At the end we show some relations between the parameters of these 
codes and those of the affine cartesian codes. \\ \\
Keywords: Projective codes; Reed-Muller type codes; Gr\"obner bases methods.  \\ \\
Mathematics Subject Classification 2010: 14G50; 11T71; 94B27
\end{abstract}

\maketitle 
\section{Introduction}\label{intro}
Let $K:=\mathbb{F}_q$ be a field with $q$ elements and let $A_0,\ldots, A_n$ be 
a collection of non-empty subsets of $K$.  Consider a {\it projective cartesian set\/}
\begin{eqnarray*}
\mathcal{X}:=\left[A_0\times A_1\times\cdots\times 
A_n\right]:=\left\{(a_0:\cdots :a_n) \; : \;  a_i\in A_i
\mbox{ for all } i\right\}\subset\mathbb{P}^{n},
\end{eqnarray*}
where $\mathbb{P}^{n}$ is a projective space over the field $K.$

In what follows $d_i$ denotes $|A_i|$, the
cardinality of $A_i$ for $i=0,\ldots,n$. We shall always 
assume that $2\leq d_i\leq d_{i+1}$ for all $i$. 

Let $S:=K[X_0,\ldots,X_n]$ 
be a polynomial ring  
over the field $K$, 
let $P_1,\ldots,P_m$ be the points of $\mathcal{X}$ written with the usual (see e.g.\ \cite{sorensen}, \cite{lachaud}, \cite{ballet}) representation for
projective points, that is, zeros to the left and the first nonzero entry equal 
1, and
let $S_{d}$ be
the $K$-vector space of all homogeneous polynomials of $S$ of degree $d$ together with the zero polynomial.
The {\it evaluation map\/}
\begin{equation*}\label{ev-map}
\varphi_d\colon S_{d}\longrightarrow K^{|\mathcal{X}|},\ \ \ \ \ 
f\mapsto \left(f(P_1),\ldots,f(P_m)\right),
\end{equation*}
defines a linear map of
$K$-vector spaces. The image of $\varphi_d$, denoted by $C_{\mathcal{X}}(d)$,
defines a linear code (as usual by a {\it linear code\/} we mean
a linear subspace of $K^{|\mathcal{X}|}$). We call
$C_{\mathcal{X}}(d)$ a {\it projective cartesian code\/}
of order $d$ defined over $A_0,\ldots, A_n$.  
Thus the projective cartesian codes are part of the family of evaluation codes 
defined on a 
subset of a projective space, see \cite{c-v}, \cite{c-duursm}, 
\cite{comp-inters} and \cite{sarabia-renteria}  for other examples.
An important special case of the projective cartesian codes, which served as 
motivation for our 
work, is the one where  $A_i = K$ for all $i = 0, \ldots, n$. Then  we have 
$\mathcal{X} = \mathbb{P}^{n}$ and $C_{\mathcal{X}}(d)$ is the so-called  
projective Reed-Muller code (of order $d$), as  defined and studied in  
\cite{lachaud} or \cite{sorensen}.

The {\it dimension\/} and the {\it length\/} of $C_{\mathcal{X}}(d)$ 
are given by $\dim_K C_{\mathcal{X}}(d)$ (dimension as $K$-vector space)
and  $|{\mathcal{X}}|$  respectively.
The {\it minimum
distance\/} of $C_{\mathcal{X}}(d)$  is given by 
$$
\delta_{\mathcal{X}}(d)=\min\{|\varphi_d(f)|
\; : \; \varphi_d(f)\neq 0; f\in S_{d}\},
$$
where $|\varphi_d(f)|$ is the number of non-zero
entries of $\varphi_d(f)$.
These are the main parameters of the code  $C_{\mathcal{X}}(d)$ and they are presented in  
the main results of this paper, although we find the minimum distance only  when the $A_i's$ satisfy certain conditions (Definition \ref{24-03-13}).

In the next section we compute the length and the dimension of $C_{\mathcal{X}}(d)$, and to do this we use some concepts of commutative algebra which we now recall. The {\it vanishing ideal\/} of $\mathcal{X}\subset\mathbb{P}^{n}$, denoted by $I(\mathcal{X})$, is the ideal of $S$
generated by the homogeneous polynomials that vanish on all points of $\mathcal{X}$.
We are interested in the algebraic invariants (degree, Hilbert function) of $I(\mathcal{X}),$
because the kernel of the evaluation map, $\varphi_d,$ is precisely $I(\mathcal{X})_d,$
where  $I(\mathcal{X})_d:=S_{d}\cap I(\mathcal{X}).$
In general, for any subset (ideal or not) $\mathcal{F}$ of $S$ we define $\mathcal{F}_d:=\mathcal{F}\cap S_d.$
The {\it Hilbert function\/} of $S/I(\mathcal{X})$ is given by 
$$H_\mathcal{X}(d):= \dim_K(S_{d}/I({\mathcal{X}})_{d}),$$
so $H_\mathcal{X}(d)$ is precisely the dimension of $C_{\mathcal{X}}(d).$

We will also need tools from Gr\"obner bases theory, which we recall briefly. 

Let $\prec$ be a monomial order defined on the set $\mathcal{M}$ of monomials 
of the polynomial ring  $S$, i.e.  
$\prec$ is a total order on $\mathcal{M}$, we have $1 \prec M$ for any monomial 
$M$, 
and if $M_1 \prec M_2$ then $M M_1 \prec M M_2$ for all $M, M_1, M_2 \in 
\mathcal{M}$. The largest monomial in a nonzero polynomial $f$ is called the 
{\em leading monomial} of $f$ and is denoted by $\lm(f)$.

\begin{definition}
Let $I$ be an ideal of $S$. A set $\{g_1, \ldots, g_s\} 
\subset I$ is a {\em Gr\"obner basis} for $I$ (with respect to $\prec$) if 
for every $f \in I$, $f \neq 0$, we have that $\lm(f)$ is a multiple of 
$\lm(g_i)$ for some $i \in \{1, \ldots, s\}$.
\end{definition}

\noindent
Gr\"obner basis were introduced in \cite{bruno} by Bruno Buchberger, who 
proved that any ideal has a Gr\"obner basis (with 
respect to a fixed monomial order). Also it is not difficult to prove that if 
$\{g_1, \ldots, g_s\}$ is 
a Gr\"obner basis for $I$ then $I = \langle g_1, \ldots, g_s \rangle$. An 
important related 
concept is that of footprint, which we now present.
\begin{definition}
The {\it footprint} (with respect to a monomial order $\prec$) of an ideal $I 
\subset S$, denoted by $\Delta(I)$,
is the set of monomials which are not leading
monomials of any polynomial in $I$. 
\end{definition}

The footprint of an ideal $I$ is closely related to a Gr\"obner basis for $I$ 
(both being defined with respect to the same monomial order in 
$\mathcal{M}$), as the 
following result shows.

\begin{proposition}\label{gb-x-di}  Let $I \subset S$ be an 
ideal and let 
$\{g_1, \ldots, g_s\}$ be a Gr\"obner basis  for $I$. Then a monomial $M$ is in 
$\Delta(I)$ if 
and only if $M$ is not a multiple of $lm(g_i)$ for all $i = 1, \ldots, s$.
\end{proposition}
\begin{proof} The ``only if" part is obvious from the definition of the 
footprint. On 
the other hand, from the definition of Gr\"obner basis we know that if 
$M$ is not a multiple of $\lm(g_i)$ for all $i = 1, \ldots, s$ then $M$ is not 
the leading monomial  of any polynomial in  $I$. 
\end{proof}

The main property of the footprint, as proved in \cite{bruno}, is that  $\{ M + 
I \; : \; M \in \Delta(I) \}$ is a basis 
for $S/I$ as a $K$-vector space.
Assume that  $I$ is an homogeneous ideal and let  $d$ be a nonnegative integer. 
It is not difficult to see that the classes of the monomials in  
$\Delta(I)_d = \{ M \in \Delta(I) \; : \; \deg(M) = d\}$ form a basis, as a 
$K$-vector space, for  $S_d/I_d$, which gives a connection between the 
footprint and the Hilbert function. We can use this to check if a set 
$G=\{g_1 , g_2 , \ldots , g_s \}$ is a Gr\"obner basis. For this we define 
$
 \Delta(G) := \left\{ M \; : \;  \text{for all } i, {\ \rm lm}(g_i) \nmid M 
\right\}, 
$
hence  $\Delta(G)_d = \{ M \in \Delta(G) \; : \; \deg(M) = d\}$.
\begin{lemma}\label{grobner_pegada}
Fix a graded monomial order in $S$. Let $I$ be a homogeneous ideal of $S$
and $G=\{g_1,g_2,\ldots, g_s \}$ a set of generators of $I$.  
The set $G$ is a Gr\"obner Basis of $I$ if and only if the Hilbert function of 
$I$
is given by
$H_I(d) = | \Delta(G)_d|$
for all $d \geq 0$.
\end{lemma}
\begin{proof}
If $G$ is a Gr\"obner basis for $I$ then, from Proposition \ref{gb-x-di} we 
get that $\Delta(I) = \Delta(G)$, hence $H_I(d) = | \Delta(G)_d|$ for all $d 
\geq 0$. On the other hand, observe that $\Delta(I) \subset \Delta(G)$, and a 
fortiori $\Delta(I)_d \subset \Delta(G)_d$ for all $d \geq 0$. If 
$|\Delta(G)_d| = H_I(d) = |\Delta(I)_d|$ then $\Delta(I) = \Delta(G)$
so $G$ is, by definition, a Gr\"obner basis for $I$.
\end{proof}

In the next section the relation between the Hilbert function and the footprint 
established in the above Lemma will be used to prove that a certain set 
$\mathcal{G}$ is a Gr\"obner basis for $\mathcal{X}$ under certain conditions 
(see 
Proposition \ref{basedeG}). Gr\"obner bases will also play an important role 
in the proof of Proposition  \ref{dist_min}, which by its turn is a key 
ingredient in the proof of the main result of Section 3, which
determines the minimum distance of a particular type of projective cartesian 
code 
defined by the product of subfields of $K$ (see Definition 
\ref{proj_nest_fields}).
We will use more than once results about affine cartesian codes, which we now 
recall.

Let $\mathcal{Y} := A_1 \times \cdots \times A_n \subset \mathbb{A}^n$, 
where $\mathbb{A}^n$ is the $n$-dimensional affine space defined over $K$. For 
a nonnegative integer $d$ write  $S_{\leq d}$ for the $K$-linear subspace of 
$K^n$ formed by the polynomials in $K[X_1, \ldots , X_n]$ of degree up to $d$ 
together with the zero polynomial. Clearly $| \mathcal{Y}| = \Pi_{i = 1}^n d_i 
=: \tilde{m}$ and let $Q_1, \ldots, Q_{\tilde{m}}$ be the points of 
$\mathcal{Y}$. Define  $\phi_d: S_{\leq d} \rightarrow K^{\tilde{m}}$ as the 
evaluation morphism $\phi_d(g) = (g(Q)_1), \ldots, g(Q_{\tilde{m}}))$.

\begin{definition}
The image $C^{*}_{\mathcal{Y}}(d)$  of $\phi_d$ is a subvector space of $K^{\tilde{m}}$ called the {\em affine cartesian code} (of order $d$) defined over the sets $A_1, \ldots, A_n$.
\end{definition}  

These codes were introduced in \cite{lopez-villa}, and also appeared independently and in a generalized form in \cite{Geil}. They are a type of affine variety code, as defined in \cite{fl}. In \cite{lopez-villa} the authors prove that we may ignore sets with just one element, and moreover may always assume that $2 \leq d_1 \leq \cdots \leq d_n$. They also completely determine the parameters of these codes, which are as follows.

\begin{theorem}\cite[Thm.\ 3.1 and Thm.\ 3.8]{lopez-villa}\label{3.1e3.8} \\
1) The dimension of $C^{*}_{\mathcal{Y}}(d)$ is $\tilde{m}$ (i.e.\ $\phi_d$ is surjective) if $d \geq \sum_{i = 1}^n (d_1 - 1)$, and for $0 \leq d < \sum_{i = 1}^n (d_1 - 1)$ we have 
\begin{align*}
   \dim(C^{*}_{\mathcal{Y}}(d)) = & \binom{n + d }{d}  -   \sum_{i = 1}^n \binom{n + d - d_i}{d - d_i} \\ & + \cdots +   
   (-1)^j \sum_{1 \leq i_1 < \cdots < i_j \leq n} \binom{n + d - d_{i_1} - \cdots -   d_{i_j}}{d - d_{i_1} - \cdots -   d_{i_j}}
   \\ &  + \cdots +   (-1)^n \binom{n + d - d_{1} - \cdots -   d_{n}}{d - d_{1} - \cdots -   d_{n}} 
\end{align*}
where we set $\binom{a}{b} = 0$ if $b < 0$. \\
2) The minimum distance $\delta^{*}_{\mathcal{Y}}(d)$ of $C^{*}_{\mathcal{Y}}(d)$ is 1, if $d \geq \sum_{i = 1}^n (d_i - 1)$, and for $0 \leq d < \sum_{i = 1}^n (d_i - 1)$ we have 
\[\delta^{*}_{\mathcal{Y}}(d) = (d_{k + 1} - \ell ) \prod_{i = k + 2}^n d_i \]
where $k$ and $\ell$ are uniquely defined by $d = \sum_{i = 1}^k (d_i - 1) + \ell$ with $0 \leq \ell < d_{k + 1} - 1$ (if $k + 1 = n$ we understand that $\prod_{i = k + 2}^n d_i = 1$, and if $d < d_1 - 1$ then we set $k = 0$ and $\ell = d$).
\end{theorem}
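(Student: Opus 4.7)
The plan is to handle both parts via the Gröbner basis of the vanishing ideal $I(\mathcal{Y}) \subset K[X_1,\ldots,X_n]$. Because $\mathcal{Y} = A_1 \times \cdots \times A_n$, the ideal $I(\mathcal{Y})$ is generated by the univariate polynomials $f_i(X_i) := \prod_{a \in A_i}(X_i - a)$, $i = 1, \ldots, n$. Their leading terms $X_i^{d_i}$ involve pairwise disjoint variables, so $\{f_1, \ldots, f_n\}$ is a Gröbner basis under any term order and the standard monomials modulo $I(\mathcal{Y})$ are exactly $\{X_1^{a_1}\cdots X_n^{a_n} : 0 \leq a_i \leq d_i - 1\}$.

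For part (1), $\dim C^{*}_{\mathcal{Y}}(d)$ equals the number of standard monomials of total degree at most $d$, i.e.\ the cardinality of $\{(a_1, \ldots, a_n) \in \mathbb{Z}_{\geq 0}^n : \sum a_i \leq d,\ a_i \leq d_i - 1 \text{ for all } i\}$. Starting from $\#\{a \in \mathbb{Z}_{\geq 0}^n : \sum a_i \leq d\} = \binom{n+d}{d}$ and applying inclusion-exclusion over subsets $J \subseteq \{1, \ldots, n\}$ of indices where $a_j \geq d_j$ (substitute $a_j = d_j + b_j$) produces the stated alternating sum. The convention $\binom{a}{b} = 0$ for $b<0$ kills terms with $d - \sum_{j \in J} d_j < 0$; when $d \geq \sum (d_i - 1)$, every standard monomial already lies in $S_{\leq d}$, so $\phi_d$ is surjective of dimension $\tilde{m}$.

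For part (2), the lower bound comes from the footprint bound. Given a nonzero class $\bar f \in S_{\leq d}/I(\mathcal{Y})_{\leq d}$, reduce $f$ modulo $f_1,\ldots,f_n$ so that $f$ is supported on standard monomials, and let $X^\alpha$ be its leading monomial; then $0 \leq \alpha_i \leq d_i - 1$ and $|\alpha| \leq d$. The initial ideal of $I(\mathcal{Y}) + \langle f \rangle$ contains $\langle X_1^{d_1}, \ldots, X_n^{d_n}, X^\alpha\rangle$, whose standard monomials number $\tilde{m} - \prod_{i=1}^n (d_i - \alpha_i)$. Since $\#\{Q \in \mathcal{Y} : f(Q) = 0\}$ is bounded above by this count, $\|\phi_d(f)\| \geq \prod_{i=1}^n (d_i - \alpha_i)$. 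Setting $\gamma_i = d_i - \alpha_i$, the task reduces to minimising $\prod \gamma_i$ over $\gamma_i \in [1, d_i] \cap \mathbb{Z}$ subject to $\sum \gamma_i \geq \sum d_i - d$. Writing $d = \sum_{i=1}^k (d_i - 1) + \ell$ with $0 \leq \ell < d_{k+1} - 1$, I expect the minimum to equal $(d_{k+1} - \ell) \prod_{i \geq k+2} d_i$, attained at $\gamma = (1, \ldots, 1, d_{k+1} - \ell, d_{k+2}, \ldots, d_n)$. Sharpness is witnessed by the explicit polynomial
\[
f := \prod_{i=1}^k \prod_{a \in A_i \setminus \{b_i\}}(X_i - a) \cdot \prod_{a \in A'_{k+1}} (X_{k+1} - a),
\]
with $A'_{k+1} \subset A_{k+1}$ of size $\ell$ and $b_i \in A_i$ arbitrary, whose non-vanishing locus on $\mathcal{Y}$ is $\{b_1\} \times \cdots \times \{b_k\} \times (A_{k+1} \setminus A'_{k+1}) \times A_{k+2} \times \cdots \times A_n$, of size $(d_{k+1} - \ell) \prod_{i \geq k+2} d_i$.

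The main obstacle is the combinatorial minimisation: while the proposed $\gamma$ is easily seen to be feasible and to attain the stated value, ruling out that another feasible $\gamma$ produces a smaller product requires an exchange argument exploiting $d_i \leq d_{i+1}$ to migrate ``mass'' from low-indexed to high-indexed coordinates. This is most cleanly organised by induction on $n$: fix $\gamma_1$, apply the $(n-1)$-dimensional minimum to $\gamma_2, \ldots, \gamma_n$, and then optimise over $\gamma_1$, using the monotonicity of the $d_i$ to conclude that the extremal choice is $\gamma_1 = 1$ unless the mass constraint forces $\gamma_1 > 1$.
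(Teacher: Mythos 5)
This statement is not proved in the paper at all: it is imported verbatim from \cite{lopez-villa} (Theorems 3.1 and 3.8), with the paper merely remarking that \cite{carvalho} gives a simpler, footprint-style proof of the distance formula. So there is no internal proof to compare against, but your argument is a correct independent derivation and is essentially the \cite{carvalho}-style route. Part (1) is fine: $\{f_1,\dots,f_n\}$ is a Gr\"obner basis (pairwise coprime leading monomials $X_i^{d_i}$), so the dimension is the number of standard monomials of degree at most $d$, and your inclusion--exclusion reproduces the alternating sum, with surjectivity for $d\ge\sum_{i=1}^n(d_i-1)$ (note the statement's ``$d_1$'' is a typo you silently corrected). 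Part (2) via the footprint bound is also sound: the reduced representative has exponents $\alpha_i\le d_i-1$, $|\alpha|\le d$ (specify a graded order, or observe reduction by the univariate $f_i$ never raises total degree), and the bound $\|\phi_d(f)\|\ge\prod_i(d_i-\alpha_i)$ together with your explicit polynomial gives both inequalities. The one piece you leave as a sketch --- minimising $\prod_i(d_i-a_i)$ subject to $\sum a_i\le d$ --- is exactly Lemma \ref{2.1} of the paper (Lemma 2.1 of \cite{carvalho}), and your proposed induction on $n$ with an exchange argument using $d_i\le d_{i+1}$ is indeed how that lemma is proved, so this is an acknowledged reliance on a known lemma rather than a gap. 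The only genuine omission is the trivial regime $d\ge\sum_{i=1}^n(d_i-1)$ for the minimum distance, where weight $1$ is attained by $\prod_{i=1}^n\prod_{a\in A_i\setminus\{b_i\}}(X_i-a)$; adding that line makes the argument complete.
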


We will also use a result from \cite{lopez-villa} in which the authors 
determine the (homogeneous) ideal of the  set $\bar{\mathcal{Y}} := [1 \times 
A_1 \times \cdots \times A_n]$ (in what follows we use, in a cartesian product, 
$1$ to denote the set $\{1\}$ and $0$ to denote the set $\{0\}$). 

\begin{theorem}\cite[Thm.\ 2.5]{lopez-villa}\label{2.5} 
\[
I(\bar{\mathcal{Y}} ) =\langle \Pi_{a_1 \in A_1} (X_1 - a_1 X_0), \ldots, \Pi_{a_n \in A_n} (X_n - a_n X_0) \rangle
\]
\end{theorem}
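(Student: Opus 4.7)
The plan is to prove both inclusions. The containment $J \subseteq I(\bar{\mathcal{Y}})$ is immediate: any point of $\bar{\mathcal{Y}}$ has the form $P = (1:a_1:\cdots:a_n)$ with $a_i \in A_i$, and evaluating the $j$th generator $F_j := \prod_{a \in A_j}(X_j - a X_0)$ at $P$ yields $\prod_{a \in A_j}(a_j - a) = 0$ since one factor vanishes.

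For the reverse inclusion I would argue via Gr\"obner bases. Equip $S$ with the lexicographic order $X_1 > X_2 > \cdots > X_n > X_0$, so that $\mathrm{LT}(F_j) = X_j^{d_j}$ for each $j$. These leading monomials lie in pairwise distinct variables and are therefore coprime, so by Buchberger's criterion $\{F_1, \ldots, F_n\}$ is a Gr\"obner basis of $J$, with initial ideal $\langle X_1^{d_1}, \ldots, X_n^{d_n} \rangle$. Consequently the standard monomials modulo $J$ are exactly those of the form $X_0^{a_0} X_1^{a_1} \cdots X_n^{a_n}$ with $0 \leq a_i < d_i$ for $i = 1, \ldots, n$ (and $a_0$ unrestricted).

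Now take a homogeneous $f \in I(\bar{\mathcal{Y}})_d$ and reduce it modulo this basis to get $f = \sum_j g_j F_j + r$, where $r$ is a $K$-linear combination of standard monomials of degree $d$; the goal is to show $r = 0$. Since $r \in I(\bar{\mathcal{Y}})$, its dehomogenization $\tilde{r}(X_1, \ldots, X_n) := r(1, X_1, \ldots, X_n)$ vanishes on $\mathcal{Y} = A_1 \times \cdots \times A_n$, and by construction $\deg_{X_i} \tilde{r} < d_i$ for each $i$. A standard interpolation argument (induction on $n$, using that a univariate polynomial of degree less than $d_i$ cannot have $d_i$ distinct roots) forces $\tilde{r} = 0$. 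Because $r$ is homogeneous of degree $d$, the $X_0$-exponent of each standard monomial appearing in $r$ is determined by $a_0 = d - \sum_{i \geq 1} a_i$, so dehomogenization is injective on standard monomials of degree $d$; hence $r = 0$ and $f \in J$. The only subtle point is verifying this injectivity of dehomogenization on the remainder space, after which the interpolation lemma closes the argument cleanly.
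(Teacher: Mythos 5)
Your proposal is correct, but note that there is nothing in this paper to compare it against: Theorem \ref{2.5} is quoted from \cite{lopez-villa} without proof, so what you have written is an independent, self-contained argument rather than a variant of an internal one. Both directions check out. The easy containment is fine, and the reverse inclusion works as you say: with lex order $X_1\succ\cdots\succ X_n\succ X_0$ the leading monomials of the generators $F_j=\prod_{a\in A_j}(X_j-aX_0)$ are $X_j^{d_j}$, pairwise coprime, so Buchberger's first criterion makes $\{F_1,\ldots,F_n\}$ a Gr\"obner basis of $J$ with standard monomials $X_0^{a_0}X_1^{a_1}\cdots X_n^{a_n}$, $a_i<d_i$ for $i\geq 1$; the grid interpolation lemma kills the dehomogenized remainder, and homogeneity of $r$ makes dehomogenization injective on the degree-$d$ standard monomials. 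Two small points deserve an explicit sentence: first, although lex is not a graded order, the remainder of a homogeneous $f$ on division by the homogeneous $F_j$ is still homogeneous of degree $d$ (each reduction step subtracts a homogeneous polynomial of degree $d$), which is exactly what your injectivity step needs; second, it suffices to treat homogeneous $f$ because $I(\bar{\mathcal{Y}})$ is by definition generated by its homogeneous elements. Compared with simply citing \cite{lopez-villa}, your route has the merit of being in the same footprint/Gr\"obner spirit as Lemma \ref{grobner_pegada} and the coprime-leading-monomial argument already used in the proof of Proposition \ref{dist_min}, and as a by-product it identifies the standard monomials of $I(\bar{\mathcal{Y}})$, from which the dimension formula in Theorem \ref{3.1e3.8}(1) can be read off directly.
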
 

In \cite{carvalho} there are results on higher Hamming weights of affine cartesian codes, and also a proof of the minimum distance formula stated above which is simpler from the one found in \cite{lopez-villa} and uses methods similar to the ones used here. We will need a result from \cite{carvalho} which we reproduce here for the reader's convenience.

\begin{lemma}\cite[Lemma 2.1]{carvalho}\label{2.1} 
Let $ 0 < e_1 \leq \cdots \leq e_n$ and $0 \leq s \leq \sum_{i = 1}^n (e_i - 
1)$ be integers. Let $m(a_1, \ldots , a_n) = \prod_{i = 1}^n (e_i - a_i)$, 
where $0 \leq a_i < e_i$ is  an integer for all $i = 1,\ldots, n$. Then
\[ 
\min \{ m(a_1, \ldots, a_n) \; : \; a_1 + \cdots + a_n \leq s \} =  (e_{k + 1} 
- \ell) \prod_{i = k + 2}^n e_i
\]
where $k$ and $\ell$ are uniquely defined by $s = \sum_{i =1}^k (e_i -1) + \ell 
$,  with $0 \leq \ell < e_{k + 1} - 1$ (if $s < e_1 - 1$ then take $k = 0$ and 
$\ell = s$, if $k + 1 = n$ then we understand that $\prod_{i = k + 2}^n e_i = 
1$). 
\end{lemma}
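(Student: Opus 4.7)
The upper bound is immediate from the explicit witness $a_i = d_i - 1$ for $1 \le i \le k$, $a_{k+1} = \ell$, and $a_i = 0$ for $i > k+1$. This is feasible, since $\sum_i a_i = \sum_{i=1}^k (d_i - 1) + \ell = s$, and evaluates to $m(a_1, \ldots, a_n) = 1^k \cdot (d_{k+1} - \ell) \cdot \prod_{i=k+2}^n d_i$, matching the claimed value.

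For the lower bound I would use a greedy exchange argument, showing that any feasible tuple can be transformed into the witness above without increasing $m$. First, one may assume $\sum_i a_i = s$: otherwise, if $\sum_i a_i < s$ and $a_i < d_i - 1$ for some $i$, incrementing $a_i$ by one strictly decreases $m$. Setting $b_i := d_i - a_i$, I would argue iteratively that one can enforce $a_1 = d_1 - 1$ (when $k \geq 1$), then $a_2 = d_2 - 1$, and so on. For the first step, if $a_1 < d_1 - 1$ then $s \geq d_1 - 1 > a_1$ forces some $j \geq 2$ with $a_j \geq 1$. A one-unit transfer $a_1 \mapsto a_1 + 1$, $a_j \mapsto a_j - 1$ changes the pairwise product $b_1 b_j$ by $b_1 - b_j - 1$, which is nonpositive precisely when $b_j \geq b_1 - 1$. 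When this inequality holds, the exchange weakly decreases $m$.

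The main obstacle is the complementary case $b_j < b_1 - 1$, where a single-unit swap would strictly increase $m$. Here one performs a larger transfer of size $\delta := \min(b_1 - 1, a_j)$. If $\delta = b_1 - 1$ then $a_1$ jumps to $d_1 - 1$, and $b_1 b_j$ changes by $-(b_1 - 1)(b_j - 1) \leq 0$; if $\delta = a_j$ then $a_j$ is zeroed out, and $b_1 b_j$ changes by $-a_j(d_j - b_1)$, which is also $\leq 0$ since $d_j \geq d_1 \geq b_1$. In either sub-case $m$ does not increase, and after finitely many such exchanges applied to coordinates $1, 2, \ldots, k$ one reaches the greedy witness, which yields the claimed lower bound $(d_{k+1} - \ell) \prod_{i=k+2}^n d_i$.
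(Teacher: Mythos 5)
This lemma is quoted from \cite{carvalho} and the paper gives no proof of it, so there is nothing internal to compare against; I can only judge your argument on its own terms. Its mechanics are sound: with $b_i=d_i-a_i$, a transfer of $\delta$ units from coordinate $j$ to a target coordinate $i<j$ changes $b_ib_j$ by $\delta(b_i-b_j-\delta)$, and your three cases (unit transfer when $b_j\ge b_i-1$; $\delta=b_i-1$, giving $-(b_i-1)(b_j-1)\le 0$; $\delta=a_j$, giving $-a_j(d_j-b_i)\le 0$ because $d_j\ge d_i\ge b_i$) are exhaustive and each move strictly increases the target coordinate, so each phase terminates. The normalization $\sum_i a_i=s$ and the existence of a donor $j>i$ with $a_j\ge 1$ (your $i=1$ argument generalizes verbatim, since $\sum_{t\ge i}a_t=s-\sum_{t<i}(d_t-1)\ge d_i-1>a_i$ for $i\le k$) are also fine.

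The genuine gap is the final sentence: exchanges applied to coordinates $1,\ldots,k$ do \emph{not} land you on the greedy witness. After saturating $a_1=d_1-1,\ldots,a_k=d_k-1$, the remaining $\ell$ units may be spread arbitrarily over coordinates $k+1,\ldots,n$, and you still owe the inequality $\prod_{i>k}(d_i-a_i)\ge (d_{k+1}-\ell)\prod_{i>k+1}d_i$; note that when $k=0$ this leftover step is the entire lower bound, so as written the proof proves nothing in that case. The gap closes with one more phase of exactly your moves, now targeting coordinate $k+1$: since the total mass on coordinates $\ge k+1$ equals $\ell<d_{k+1}-1$, the cap $a_{k+1}\le d_{k+1}-1$ can never be reached, so every transfer is either a unit transfer with $b_j\ge b_{k+1}-1$ or has $\delta=a_j$, and both weakly decrease $m$ (again because $d_j\ge d_{k+1}\ge b_{k+1}$). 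After finitely many such moves $a_{k+1}=\ell$ and $a_j=0$ for $j>k+1$, which is the witness; adding this phase makes your argument complete.
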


%
%
%

\section{Length and dimension}\label{10-09-13}
In this section we define the projective nested cartesian codes and
compute their length and dimension.
We keep the notation and definitions used in
Section~\ref{intro}. 

For $A,B$ subsets of $K$ we write $A^{\neq 0}$ to denote the set $A \setminus\{0\}$ and we define 
$\frac{A}{B}:=\left\{\frac{a}{b} \; : \; a\in A, b\in B^{\neq 0}\right\}.$

\begin{definition}\rm\label{24-03-13}
The projective cartesian set
$\mathcal{X}=\left[A_0\times A_1\times\cdots\times A_n\right]$
is called {\it projective nested cartesian set\/} if:
\begin{itemize}
\item[\rm (i)] for all $i=0,\ldots,n$ we have $0\in A_i,$
\item[\rm (ii)] for every $i=1,\ldots,n$ we have $\frac{A_j}{A_{i-1}}\subset 
A_j$ for $j=i,\ldots, n$.
\end{itemize}
For any $d \geq 0$ the  associated linear code $C_{\mathcal{X}}(d)$ is called a
{\it projective nested cartesian code}.
\end{definition}


\begin{lemma} \label{20-05-14}
For $A,B$ subsets of $K$ with $0 \in A \cap B$ we have 
$\frac{A}{B}\subset A \Longleftrightarrow AB \subset A.$
\end{lemma}
\begin{proof}
If $B = \{ 0 \}$ then $\frac{A}{B} = \emptyset$, $A B = \{0\} \subset A$ and 
the lemma is true.  If $B \supsetneqq \{ 0 \}$ and 
$\frac{A}{B}\subset A$ then 
 taking $b \in B^{\neq 0}$ we 
have a bijection $A \rightarrow A$ given by $a\mapsto a/b$ whose inverse is  
the map $a\mapsto ab$, so that $A B \subset A$.
Conversely, if $A B \subset A$ taking $b \in B^{\neq 0}$ we 
have a bijection $A \rightarrow A$ given by $a\mapsto ab$ whose inverse is  
the map $a\mapsto a/b$, so that
$\frac{A}{B}\subset A$.
\end{proof}

From the above Lemma we get that  condition {\rm (ii)} of Definition 
\ref{24-03-13} is equivalent to the following condition:
\begin{itemize}
\item[\rm (ii')] for every $i=1,\ldots,n$ we have $A_j A_{i-1} \subset A_j$ for $j=i,\ldots, n.$
\end{itemize}

\begin{example}\label{26-03-13}
If we take $A_i=K$ for all $i=0,\ldots,n$ then the the conditions
of Definition~\ref{24-03-13} are satisfied, so $\mathbb{P}^n$ is a projective nested cartesian set
and the projective Reed-Muller codes are 
projective nested cartesian codes.
\end{example}
%

\begin{lemma}\label{15-05-13}
If $\mathcal{X}=\left[A_0\times A_1\times A_2\times\cdots\times A_n\right]$
is a projective nested cartesian set then
$$I(\mathcal{X})=\left<X_i\prod_{a_j\in A_j}\left(X_j-a_j X_i\right) \; : \; i 
< j , \; \; i,j=0,\ldots,n\right>.$$
\end{lemma}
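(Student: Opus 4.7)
My plan is to prove both inclusions separately. The containment $J \subseteq I(\mathcal{X})$ is a direct case analysis, while for $I(\mathcal{X}) \subseteq J$ I would induct on $n$, peeling off the hyperplane $X_0 = 0$ and combining Theorem \ref{2.5} on the affine chart with the inductive hypothesis on the part at infinity.

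For $J \subseteq I(\mathcal{X})$, I take a point $P = (0, \dots, 0, 1, b_{r+1}, \dots, b_n) \in \mathcal{X}$ in its normalized form and evaluate a generator $F_{ij} := X_i \prod_{a_j \in A_j}(X_j - a_j X_i)$ at $P$. The three cases $i < r$, $i = r$, and $i > r$ each produce a vanishing factor: the outer $X_i$ kills $F_{ij}$ in the first case; in the second case the factor with $a_j = b_j$ vanishes since $X_i(P) = 1$; and in the third case either $X_i(P) = b_i = 0$ or else $b_j/b_i \in A_j/A_i \subseteq A_j$ by condition (ii), so the factor with $a_j = b_j/b_i$ vanishes.

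For the reverse inclusion I would proceed by induction on $n$, the case $n = 0$ being trivial since both ideals are zero. For $n \ge 1$, set $\mathcal{X}_0 := \{P \in \mathcal{X} : X_0(P) \neq 0\}$ and $\mathcal{X}_\infty := \{P \in \mathcal{X} : X_0(P) = 0\}$. Condition (ii) ensures every point of $\mathcal{X}_0$ normalizes to $(1, b_1, \dots, b_n)$ with $b_j \in A_j$, so $\mathcal{X}_0$ is exactly the set $\bar{\mathcal{Y}}$ of Theorem \ref{2.5}, giving $I(\mathcal{X}_0) = \langle G_1, \dots, G_n\rangle$ with $G_j := \prod_{a_j \in A_j}(X_j - a_j X_0)$. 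Meanwhile $\mathcal{X}_\infty$ sits inside the hyperplane $V(X_0) \cong \mathbb{P}^{n-1}$ and identifies there with the projective nested cartesian set $[A_1 \times \cdots \times A_n]$ (the conditions of Definition \ref{24-03-13} transfer directly), so the inductive hypothesis yields $I(\mathcal{X}_\infty) = \langle X_0 \rangle + J'$ with $J' := \langle F_{ij} : 1 \le i < j \le n\rangle$.

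Finally I would combine these using $I(\mathcal{X}) = I(\mathcal{X}_0) \cap I(\mathcal{X}_\infty)$. Given a homogeneous $f \in I(\mathcal{X})$, write $f = X_0 h + f'$ with $f' \in J'$ using the description of $I(\mathcal{X}_\infty)$; since $f$ and $f'$ both lie in $I(\mathcal{X}_0)$, so does $X_0 h$. The crux is deducing $h \in I(\mathcal{X}_0)$, i.e., that $X_0$ is a non-zerodivisor modulo $I(\mathcal{X}_0)$, and this is the step I expect to require the most care. It should follow because $G_1, \dots, G_n$ form a regular sequence (each $G_j$ is monic of degree $d_j$ in $X_j$), so $S/I(\mathcal{X}_0)$ is Cohen-Macaulay of dimension one with no embedded primes, and $X_0$ vanishes at no point of $\mathcal{X}_0$ and hence avoids every associated prime. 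Writing $h = \sum_j h_j G_j$ then gives $X_0 h = \sum_j h_j F_{0j} \in J$, so $f \in J + J' = J$, closing the induction.
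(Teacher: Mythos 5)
Your proof is correct, and its skeleton is the paper's: induction on $n$, splitting $\mathcal{X}$ into the chart where $X_0\neq 0$ (handled via Theorem \ref{2.5}) and the part at infinity (handled by the inductive hypothesis), with the easy containment checked pointwise using condition (ii). The difference is in the crux you flagged. The paper writes a homogeneous $F\in I(\mathcal{X})$ as $F=F_1X_0+F_2$ with $F_2\in K[X_1,\ldots,X_n]$, gets $F_2$ in the inductively known ideal by evaluating at the points $(0,m)$, and then gets $F_1\in I([1\times A_1\times\cdots\times A_n])$ simply by evaluating $F$ at the points $(1,m)$ and subtracting $F_2(m)=0$; Theorem \ref{2.5} then finishes, with no cancellation of $X_0$ ever needed. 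You instead pass through $I(\mathcal{X})=I(\mathcal{X}_0)\cap I(\mathcal{X}_\infty)$ and cancel $X_0$ by showing it is a non-zerodivisor modulo $\langle G_1,\ldots,G_n\rangle$, via the complete-intersection/Cohen--Macaulay/unmixedness argument. That argument is valid (the $G_j$ do form a regular sequence, and $X_0$ lies in no minimal prime since it vanishes on none of the lines of the cone over $\mathcal{X}_0$), but it is heavier than necessary: since $h$ may be taken homogeneous, $X_0h\in I(\mathcal{X}_0)$ already forces $h$ to vanish at every point of $\mathcal{X}_0$, where $X_0\neq 0$, so $h\in I(\mathcal{X}_0)$ by pure evaluation --- which is in effect the paper's shortcut. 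Two small points you use silently (the paper is equally terse about them): one may restrict to homogeneous $f$, and the set equality $\mathcal{X}_0=[1\times A_1\times\cdots\times A_n]$ needs, for the inclusion $\supseteq$, rescaling by a nonzero element of $A_0$ via condition (ii') of Remark \ref{20-05-14} together with the standing assumption $d_i\geq 2$ (the degenerate case being Lemma \ref{03-09-13}). What your route buys is a genuine structural fact (the chart's coordinate ring is a one-dimensional complete intersection), at the cost of machinery the lemma does not require; the paper's evaluation argument stays entirely elementary.
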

\begin{proof}
We will make an induction on $n$. If $n=1$ then $\mathcal{X}=\left[1\times A_n\right]\cup\left\{ (0 : 1) \right\}$
and from Theorem \ref{2.5} we get 
$I(\mathcal{X})=\left<X_0\prod_{a_1\in A_1}\left(X_1-a_1X_0\right)\right>.$ Now we assume that
the result is valid for $n-1.$ Take
$C_1:=\left[1\times A_1\times A_2\times\cdots\times A_n\right],$
$C_0:=\left[A_1\times A_2\times\cdots\times A_n\right]$ and $F\in I(\mathcal{X}).$
Let $m$ be an element of $C_0$ and write
$$F=F_1X_0+F_2,$$
where $F_2\in K\left[X_1,\ldots,X_n\right].$
As $\mathcal{X}$ is a projective nested cartesian set, $\mathcal{X}=C_1\cup\left[0\times C_0\right],$
so $\left[1,m\right],\left[0,m\right]\in \mathcal{X}.$ We have $0=F(0,m)=F_2(m),$ then
$F_2\in I(C_0)$ and by induction
$$F_2\in \left<X_i\prod_{a_j\in A_j}\left(X_j-a_jX_i\right) \; : \;  i < j, \; 
\;  i,j=1,\ldots,n\right>\, .$$
We know also $0=F(1,m)=F_1(m),$ then $F_1\in I(C_1)$ and
from Theorem \ref{2.5} we get 
$$F_1\in \left<\prod_{a_i\in A_i}\left(X_i-a_iX_0\right) \; : \; 
i=1,\ldots,n\right>\, .$$
As $F=F_1X_0+F_2$ the result is true.
\end{proof}

\begin{definition}\label{variedades}
Let $\mathcal{X}=\left[A_0 \times \cdots\times A_n\right]$ be a projective
nested cartesian set.
To compute the Hilbert function of $I(\mathcal{X})$ we define
\begin{eqnarray*}
&& \mathcal{X}_{i}:=\left[A_{n-i}\times\cdots\times A_n\right], \text{ so that 
} 
I(\mathcal{X}_{i})\subset K[X_{n-i},\ldots,X_n],  \text{ for } i=0,\ldots,n 
, \text{ and } 
\\
&&
\mathcal{X}_{i}^{*}:=\left[1\times A_{n+1-i}\times\cdots\times A_n\right], 
\text{ so that } I(\mathcal{X}_{i}^\ast)\subset K[X_{n-i},\ldots,X_n],   \text{ 
for 
} i=1,\ldots,n.
\end{eqnarray*}
\end{definition}
\begin{lemma}\label{16-05-13}
For any positive integer $d, H_{\mathcal{X}_n}(d)=H_{\mathcal{X}_{n-1}}(d)+H_{\mathcal{X}_n^\ast}(d-1).$
\end{lemma}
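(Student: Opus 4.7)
The plan is to construct a short exact sequence of $K$-vector spaces
\[
0 \longrightarrow S_{d-1}/I(\mathcal{X}_n^\ast)_{d-1} \xrightarrow{\,\cdot X_0\,} S_d / I(\mathcal{X}_n)_d \xrightarrow{\bar\rho} K[X_1,\dots,X_n]_d / I(\mathcal{X}_{n-1})_d \longrightarrow 0,
\]
whose three terms have dimensions $H_{\mathcal{X}_n^\ast}(d-1)$, $H_{\mathcal{X}_n}(d)$, and $H_{\mathcal{X}_{n-1}}(d)$ respectively; additivity of dimensions in a short exact sequence of finite-dimensional vector spaces then yields the identity.

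The geometric motivation is the disjoint decomposition $\mathcal{X}_n = \mathcal{X}_n^\ast \sqcup [0 \times \mathcal{X}_{n-1}]$, splitting points according to whether the first coordinate of the chosen projective representative is $1$ or $0$. I would define $\bar\rho$ to be the map induced by the substitution $f \mapsto f(0, X_1, \dots, X_n)$; it is well defined because any $f \in I(\mathcal{X}_n)_d$ vanishes on $[0 \times \mathcal{X}_{n-1}]$, forcing $f(0, X_1, \dots, X_n) \in I(\mathcal{X}_{n-1})_d$, and it is surjective since the inclusion $K[X_1,\dots,X_n]_d \subset S_d$ already realizes every class.

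For the left map, I would observe that for $g \in S_{d-1}$ the product $X_0 g$ lies in $I(\mathcal{X}_n)_d$ if and only if $g$ vanishes on $\mathcal{X}_n^\ast$, since $X_0$ equals $1$ on $\mathcal{X}_n^\ast$ and $0$ on $[0 \times \mathcal{X}_{n-1}]$; this simultaneously gives the well-definedness and the injectivity of multiplication by $X_0$. To check exactness in the middle, I would take $f \in S_d$ with $\bar\rho(\bar f) = 0$, decompose $f = X_0 g + h$ with $h \in K[X_1, \dots, X_n]_d$, and note that the vanishing condition forces $h \in I(\mathcal{X}_{n-1})_d$, so that, modulo the inclusion $I(\mathcal{X}_{n-1})_d \subseteq I(\mathcal{X}_n)_d$, one has $\bar f = \overline{X_0 g}$, placing $\bar f$ in the image of the left map.

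The one step that actually invokes the nested cartesian hypothesis---and which I expect to be the main obstacle---is precisely this inclusion $I(\mathcal{X}_{n-1})_d \subseteq I(\mathcal{X}_n)_d$. It amounts to showing that any $h(X_1,\dots,X_n) \in I(\mathcal{X}_{n-1})$ vanishes at every $(1, a_1, \dots, a_n) \in \mathcal{X}_n^\ast$. If $(a_1,\dots,a_n) = 0$ this is immediate from the homogeneity of $h$; otherwise, dividing by the first nonzero entry $a_k$ produces the projective point $(0,\dots,0,1,a_{k+1}/a_k,\dots,a_n/a_k)$, which lies in $\mathcal{X}_{n-1}$ precisely because the nested condition (ii) supplies the containments $A_j/A_k \subseteq A_j$ for $j > k$. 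Alternatively, one can read off the inclusion from Lemma \ref{15-05-13}, whose listed generators for $I(\mathcal{X}_{n-1})$ form exactly the subfamily (with $i \geq 1$) of those given there for $I(\mathcal{X}_n)$.
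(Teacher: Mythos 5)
Your construction is correct and is essentially the paper's own proof in different packaging: the paper establishes the decomposition $I(\mathcal{X}_n)_d = I(\mathcal{X}_{n-1})_d \oplus X_0\, I(\mathcal{X}_n^{*})_{d-1}$ inside $S_d = K[X_1,\ldots,X_n]_d \oplus X_0 S_{d-1}$ and passes to quotients, which is exactly the content of your short exact sequence. One correction to your commentary, though: the inclusion $I(\mathcal{X}_{n-1})_d \subseteq I(\mathcal{X}_n)_d$ is \emph{not} where the nested hypothesis is needed --- it holds for any projective cartesian set, since a homogeneous $h \in K[X_1,\ldots,X_n]$ vanishing on $\mathcal{X}_{n-1}$ vanishes at any representative $(a_0,a_1,\ldots,a_n)$ of a point of $\mathcal{X}_n$, because $(a_1,\ldots,a_n)$ is either the zero vector (use homogeneity, $d\geq 1$) or is itself a representative, coordinates already in the $A_i$, of a point of $\mathcal{X}_{n-1}$; no normalization, hence no appeal to condition (ii) of Definition \ref{24-03-13}, is required. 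The nested hypothesis enters instead through the decomposition $\mathcal{X}_n = \mathcal{X}_n^{*} \cup \left[0\times\mathcal{X}_{n-1}\right]$ that you label ``geometric motivation'' but in fact use: condition (ii) (equivalently (ii') of Remark \ref{20-05-14}) is what guarantees that every point of $\mathcal{X}_n$ with nonzero first coordinate lies in $\mathcal{X}_n^{*}$ and that $\mathcal{X}_n^{*} \subseteq \mathcal{X}_n$, and both facts are exactly what your claim ``$X_0 g \in I(\mathcal{X}_n)_d$ if and only if $g$ vanishes on $\mathcal{X}_n^{*}$'' rests on. With that attribution fixed (and the decomposition verified in one line), the proof is complete and matches the paper's.
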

\begin{proof}
We know that $S_d= K[X_1,\ldots,X_n]_d \bigoplus 
X_0K\left[X_0,\ldots,X_n\right]_{d-1}$. Let $f \in I(\mathcal{X}_n)_d$,
then $f = h + X_0 g$, where $h \in K[X_1,\ldots,X_n]_d$ and
$g \in K\left[X_0,\ldots,X_n\right]_{d-1}$. By definition \ref{variedades}, it 
is easy to see that $h \in I(\mathcal{X}_{n - 1})_d$ and
$g \in I(\mathcal{X}_n^{*})_{d - 1}$ and conversely, if
$h \in I(\mathcal{X}_{n - 1})_d$ and
$g \in I(\mathcal{X}_n^{*})_{d - 1}$, then
$h + X_0 g\in I(\mathcal{X}_n)_d$. Thus 
$I(\mathcal{X}_n)_d =  I(\mathcal{X}_{n - 1})_d \bigoplus X_0 
I(\mathcal{X}_n^{*})_{d - 1}$.
Then
\begin{eqnarray*}
S_d/I(\mathcal{X}_n)_d & \simeq &
K[X_1,\ldots,X_n]_d/I(\mathcal{X}_{n - 1})_d
\oplus 
X_0 K\left[X_0,\ldots,X_n\right]_{d-1}/X_0I(\mathcal{X}_n^{*})_{d - 1} \\
& \simeq & 
K[X_1,\ldots,X_n]_d/I(\mathcal{X}_{n - 1})_d
\oplus
K\left[X_0,\ldots,X_n\right]_{d-1}/I(\mathcal{X}_n^{*})_{d - 1}
\end{eqnarray*}
which completes the proof.
\end{proof}

\begin{lemma}\label{hil-fun}
Let $\mathcal{X}=\left[A_0 \times \cdots\times A_n\right]$ be a projective
nested cartesian set. The Hilbert function of $S/I(\mathcal{X})$ is given by
\begin{align*}
H_\mathcal{X}(d) = & \, 1+  \sum_{j=1}^n
\left[
\binom{j+d-1}{d-1}
-  \sum_{i=n+1-j}^n \binom{j+d-1-d_i}{d-1-d_i}
                       \right. \nonumber \\
&   + \cdots + 
(-1)^k \sum_{n+1-j \le i_1 < \cdots < i_k \le n}
\binom{j+d-1-(d_{i_1} + \cdots + d_{i_k})}{d-1-(d_{i_1} + \cdots + d_{i_k})} 
  \\ 	
&  + \cdots +
\left. (-1)^j 
 \binom{j+d-1-(d_{n+1-j} + \cdots + d_{n})}{d-1-(d_{n+1-j} + \cdots + d_{n})}
                \right].
\end{align*}
\end{lemma}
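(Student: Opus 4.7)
The plan is to apply Lemma \ref{16-05-13} iteratively in order to reduce the computation of $H_\mathcal{X}(d)$ to a sum of Hilbert functions of the projective closures $\mathcal{X}_j^*$, and then to identify each $H_{\mathcal{X}_j^*}(d-1)$ with the dimension of an affine cartesian code so that the closed-form in Theorem \ref{3.1e3.8}(1) supplies the binomial-coefficient expression.

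First I would observe that for each $i = 0, 1, \ldots, n$, the set $\mathcal{X}_i = [A_{n-i} \times \cdots \times A_n]$ is itself a projective nested cartesian set in $\mathbb{P}^i$, because conditions (i) and (ii) of Definition \ref{24-03-13} pass to any consecutive sub-collection of the $A_j$'s. Consequently the proof of Lemma \ref{16-05-13} applies verbatim to each $\mathcal{X}_i$ (with $X_{n-i}$ playing the role of $X_0$), yielding
\[
H_{\mathcal{X}_i}(d) = H_{\mathcal{X}_{i-1}}(d) + H_{\mathcal{X}_i^*}(d-1), \quad i = 1, \ldots, n.
\]
For the base case $\mathcal{X}_0 = [A_n] \subset \mathbb{P}^0$: since $d_n \geq 2$, $A_n$ contains a nonzero element, so $\mathcal{X}_0$ consists of the single projective point, and $H_{\mathcal{X}_0}(d) = 1$ for every $d \geq 0$. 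Telescoping the recurrence gives
\[
H_\mathcal{X}(d) = H_{\mathcal{X}_n}(d) = 1 + \sum_{j=1}^n H_{\mathcal{X}_j^*}(d-1).
\]

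Next I would identify each $H_{\mathcal{X}_j^*}(d-1)$ with the dimension of an affine cartesian code. The set $\mathcal{X}_j^* = [1 \times A_{n+1-j} \times \cdots \times A_n]$ is precisely the projective closure of the affine cartesian set $\mathcal{Y}_j = A_{n+1-j} \times \cdots \times A_n \subset \mathbb{A}^j$ (as in the paragraph preceding Theorem \ref{2.5}). Dehomogenizing at $X_{n-j} = 1$ establishes a $K$-linear bijection between $K[X_{n-j},\ldots,X_n]_d / I(\mathcal{X}_j^*)_d$ and $S_{\leq d}/I(\mathcal{Y}_j)_{\leq d}$, so $H_{\mathcal{X}_j^*}(d-1) = \dim C^*_{\mathcal{Y}_j}(d-1)$. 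Applying Theorem \ref{3.1e3.8}(1) to $\mathcal{Y}_j$ (so with the number of variables equal to $j$ and the cardinalities $d_{n+1-j}, \ldots, d_n$) produces
\[
H_{\mathcal{X}_j^*}(d-1) = \binom{j+d-1}{d-1} - \sum_{i=n+1-j}^n \binom{j+d-1-d_i}{d-1-d_i} + \cdots + (-1)^j \binom{j+d-1-(d_{n+1-j}+\cdots+d_n)}{d-1-(d_{n+1-j}+\cdots+d_n)}.
\]
Substituting this into the telescoped identity yields exactly the formula claimed in Lemma \ref{hil-fun}.

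The main obstacle will be the careful handling of the affine/projective passage, namely verifying $H_{\mathcal{X}_j^*}(d-1) = \dim C^*_{\mathcal{Y}_j}(d-1)$ and confirming that the inclusion-exclusion expression supplied by Theorem \ref{3.1e3.8}(1) is valid for all $d \geq 0$ (not merely in the range $d-1 < \sum_{i=n+1-j}^n (d_i-1)$ stated there). For $d-1$ above this threshold, a standard combinatorial identity shows the alternating sum of binomial coefficients collapses to $\prod_{i=n+1-j}^n d_i$, matching the surjectivity regime of Theorem \ref{3.1e3.8}(1); hence the formula holds uniformly in $d$ and may be plugged in without case distinction.
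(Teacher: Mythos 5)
Your proposal is correct and follows essentially the same route as the paper: telescoping Lemma \ref{16-05-13} down to $H_{\mathcal{X}_0}(d)=1$ and then substituting the affine cartesian dimension formula of Theorem \ref{3.1e3.8}(1) for each $H_{\mathcal{X}_j^*}(d-1)$. Your added remarks (that each $\mathcal{X}_i$ is again a projective nested cartesian set, that $H_{\mathcal{X}_j^*}(d-1)=\dim C^*_{\mathcal{Y}_j}(d-1)$ via dehomogenization, and that the inclusion--exclusion formula remains valid for large $d$) only make explicit points the paper leaves implicit.
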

\begin{proof}
Using Lemma~\ref{16-05-13} we have
$H_\mathcal{X}(d)=H_{\mathcal{X}_0}(d)+
\displaystyle \sum_{j=1}^nH_{\mathcal{X}_j^{*}}(d-1).$
As $\mathcal{X}_0=[1],$ then $I(\mathcal{X}_0)=0$ and $H_{\mathcal{X}_0}=1.$ From Theorem \ref{3.1e3.8} (1) we get 
\begin{align*}
H_{\mathcal{X}_{j}^{*}}(d-1)  = & \,
\binom{j+d-1}{d-1}
-  \sum_{i=n+1-j}^n \binom{j+d-1-d_i}{d-1-d_i}  
\nonumber \\
&  + \cdots +
(-1)^k \sum_{n+1-j \le i_1 < \cdots < i_k \le n}
\binom{j+d-1-(d_{i_1} + \cdots + d_{i_k})}{d-1-(d_{i_1} + \cdots + d_{i_k})} 
 \\
&  + \cdots +
(-1)^j 
\binom{j+d-1-(d_{n+1-j} + \cdots + d_{n})}{d-1-(d_{n+1-j} + \cdots + d_{n})}\, .
\qedhere
\end{align*}
\end{proof}

We come to the main result of this section.
\begin{theorem}\label{08-09-13}
Let $C_{\mathcal{X}}(d)$ be a projective nested cartesian code over $A_0,\ldots,A_n.$
The length of the code is given by $m=1+\sum_{i=1}^nd_i\cdots d_n$ and its dimension by
\begin{align*}
\dim_K C_{\mathcal{X}}(d)  = & \, 1+ \sum_{j=1}^n
\left[
\binom{j+d-1}{d-1}
-  \sum_{i=n+1-j}^n \binom{j+d-1-d_i}{d-1-d_i}  
\right. \nonumber \\
&  + \cdots +
(-1)^k \sum_{n+1-j \le i_1 < \cdots < i_k \le n}
\binom{j+d-1-(d_{i_1} + \cdots + d_{i_k})}{d-1-(d_{i_1} + \cdots + d_{i_k})} 
 \\
& + \cdots +  
\left. (-1)^j 
\binom{j+d-1-(d_{n+1-j} + \cdots + d_{n})}{d-1-(d_{n+1-j} + \cdots + d_{n})}
\right].
\end{align*}
\end{theorem}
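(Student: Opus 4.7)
The dimension formula follows directly from what has already been set up. The kernel of the evaluation map $\varphi_d$ is exactly $I(\mathcal{X})_d$, so $\dim_K C_\mathcal{X}(d) = \dim_K(S_d/I(\mathcal{X})_d) = H_\mathcal{X}(d)$, and the explicit expression computed in Lemma \ref{hil-fun} is precisely the one claimed in the theorem. There is nothing further to do for this half.

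For the length $m = |\mathcal{X}|$ I plan to argue by induction on $n$, using the same splitting that drives Lemma \ref{16-05-13}. In the base case $n = 0$ (with $d_0 \geq 2$), the set $\mathcal{X} = [A_0]$ reduces to the single point of $\mathbb{P}^0$, giving $m = 1$ in agreement with the empty sum. For the inductive step, Lemma \ref{16-05-13} yields $H_{\mathcal{X}_n}(d) = H_{\mathcal{X}_{n-1}}(d) + H_{\mathcal{X}_n^*}(d-1)$. Passing to $d$ large enough that all three Hilbert functions stabilize to the cardinalities of their underlying schemes (as recorded in Section \ref{intro}) yields $|\mathcal{X}_n| = |\mathcal{X}_{n-1}| + |\mathcal{X}_n^*|$. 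Now $\mathcal{X}_n^* = [1 \times A_1 \times \cdots \times A_n]$ has its zeroth coordinate fixed at $1$, so it is an honest cartesian product and $|\mathcal{X}_n^*| = d_1 \cdots d_n$. The tail $\mathcal{X}_{n-1} = [A_1 \times \cdots \times A_n] \subset \mathbb{P}^{n-1}$ is again a projective nested cartesian set, since the requirements of Definition \ref{24-03-13} imposed on the sub-collection $A_1, \ldots, A_n$ form a subset of those imposed on $A_0, \ldots, A_n$, so by induction $|\mathcal{X}_{n-1}| = 1 + \sum_{i=2}^n d_i \cdots d_n$. Adding the two contributions produces the announced $m = 1 + \sum_{i=1}^n d_i \cdots d_n$.

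The serious work has already been discharged by Lemmas \ref{15-05-13}, \ref{16-05-13}, and \ref{hil-fun}, so no real obstacle remains. The only items that require a touch of care are the verification that the nested condition restricts to the tail $A_1, \ldots, A_n$ (immediate from Definition \ref{24-03-13}) and the index bookkeeping when telescoping the recursion; if one preferred a more direct argument, one could alternatively partition $\mathcal{X}$ by the position $k$ of the first nonzero coordinate in the canonical representative and check, using condition (ii) together with $d_k \geq 2$, that the resulting stratum is in bijection with $A_{k+1} \times \cdots \times A_n$, producing the same count.
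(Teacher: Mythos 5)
Your proposal is correct. The dimension half is exactly the paper's argument: $\ker\varphi_d=I(\mathcal{X})_d$, so $\dim_K C_{\mathcal{X}}(d)=H_{\mathcal{X}}(d)$, and Lemma \ref{hil-fun} gives the formula. For the length, however, you take a mildly different route from the paper. The paper counts $|\mathcal{X}|$ directly, stratifying $\mathcal{X}$ by the position of the first nonzero coordinate and using condition (ii) of Definition \ref{24-03-13} to replace each $A_i^{\neq 0}$ by $1$, which immediately gives $1+\sum_{i=1}^{n}d_i\cdots d_n$; this is precisely the ``alternative'' you sketch in your last sentence. Your primary argument instead telescopes the recursion of Lemma \ref{16-05-13}, evaluated at $d$ large enough that all Hilbert functions stabilize to the cardinalities of the point sets (the degree, as recalled in the introduction via \cite[Lecture 13]{harris}), together with the observations that $|\mathcal{X}_n^{*}|=d_1\cdots d_n$ (the representative with leading coordinate $1$ is unique) and that $[A_1\times\cdots\times A_n]$ is again a projective nested cartesian set, so induction applies. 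This is valid and has the small aesthetic merit of deriving the length from the same algebraic recursion that yields the dimension; note, though, that the geometric content is the same, since the nested condition that powers the paper's direct count is exactly what makes Lemma \ref{16-05-13} true (the points of $\mathcal{X}$ with $x_0\neq 0$ coincide with $[1\times A_1\times\cdots\times A_n]$), so you are invoking it implicitly through that lemma rather than explicitly. The paper's direct partition is more elementary and self-contained; your recursion-plus-degree argument is shorter on the page but leans on the stabilization of the Hilbert function. Either way the bookkeeping you indicate (base case $n=0$ with $d_0\geq 2$, and the tail inheriting Definition \ref{24-03-13}) is right.
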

\begin{proof}
As $\mathcal{X}=\left[A_0\times A_1\times\cdots\times A_n\right]$ is a
projective nested cartesian set, then
\begin{align*}
\mathcal{X}  = & \, \left[A_0^{\neq 0}\times A_1\times A_2\times\cdots\times A_n\right]\\
& \cup \left[0\times A_1^{\neq0}\times A_2\times\cdots\times A_n\right] \\
& \qquad \qquad \qquad \vdots\\
& \cup \left[0\times0\times 0\times\cdots\times A_{n-1}^{\neq0}\times A_n\right]  \\
& \cup \left[0\times0\times 0\times\cdots\times0\times 1\right].
\end{align*}
Condition {\rm (ii)} of Definition~\ref{24-03-13} allows us
change $A_{i}^{\neq 0}$ for $1$ for all $i = 0, \ldots, n - 1$ so we get 
$| \mathcal{X} | = 1+\sum_{i=1}^nd_i\cdots d_n$. As the kernel of the evaluation map $\varphi_d$ is $S_{d}\cap I(\mathcal{X}),$
the Hilbert function of $S/I(\mathcal{X})$ agrees with the dimension
of $C_{\mathcal{X}}(d)$, so, by Lemma \ref{hil-fun} we have the dimension.
\end{proof}

From now on we choose 
 the graded lexicographic monomial order $\prec$ in $S$,
where $X_0 \prec \cdots \prec X_n$, and 
to finish this section we show that the set
$$\mathcal{G} := \left\{ X_i\prod_{a_j\in A_j}\left(X_j-a_jX_i\right) \; : \; 
i<j, \; \; i,j=0,\ldots,n \right\}$$
is a Gr\"obner basis of the ideal $I(\mathcal{X})$. In what follows $M$ denotes 
a
monomial in $S$.

\begin{lemma}\label{number_pegada}
The number of elements of $\Delta(\mathcal{G})_d$ is given by
\begin{align*}
\, & \binom{n+d}{n} - \sum_{j=1}^n  \left( \binom{n+d - d_j}{n}  - 
\binom{n - j +d - d_j}{n - j} \right)  \\ & + \cdots +
 (-1)^k \!\!\!\! \!\!\!\!\sum_{1 \le j_1< \cdots <j_k \le n} \!\!\left( 
\binom{n+ d - (d_{j_1} + \cdots + d_{j_k})}{n} - 
\binom{n - j_1 + d - (d_{j_1} + \cdots + d_{j_k})}{n - j_1} \right) \\ &   
+ \cdots +(-1)^n \binom{n+ d - (d_{1} + \cdots + d_{n} + 1)}{n}.
\end{align*}
\end{lemma}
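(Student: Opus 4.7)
The plan is to apply inclusion–exclusion after first pinning down the leading monomials of the elements in $\mathcal{G}$. In the chosen graded lexicographic order $X_0 \prec \cdots \prec X_n$, expanding $\prod_{a_j\in A_j}(X_j - a_j X_i)$ produces a polynomial of degree $d_j$ whose leading monomial is $X_j^{d_j}$, and hence the leading monomial of the generator $X_i \prod_{a_j\in A_j}(X_j - a_j X_i)$ is $X_i X_j^{d_j}$. Consequently $\Delta(\mathcal{G})_d$ is the set of degree $d$ monomials in $X_0,\ldots,X_n$ not divisible by any $X_i X_j^{d_j}$ with $0\le i<j\le n$.

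For $j=1,\ldots,n$, let $B_j$ denote the set of degree $d$ monomials that \emph{are} divisible by $X_i X_j^{d_j}$ for some $i<j$; equivalently, monomials of degree $d$ divisible by $X_j^{d_j}$ that use at least one variable $X_i$ with $i<j$. Then the complement of $\Delta(\mathcal{G})_d$ inside the set of degree $d$ monomials is $\bigcup_{j=1}^n B_j$. Counting $|B_j|$ is a standard subtraction: from the $\binom{n+d-d_j}{n}$ monomials of degree $d$ divisible by $X_j^{d_j}$ I remove those using only $X_j,\ldots,X_n$, of which there are $\binom{n-j+d-d_j}{n-j}$, obtaining $|B_j| = \binom{n+d-d_j}{n} - \binom{n-j+d-d_j}{n-j}$. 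The key observation that makes the inclusion–exclusion collapse is that for $1\le j_1<\cdots<j_k\le n$, $B_{j_1}\cap\cdots\cap B_{j_k}$ equals the set of monomials divisible by $X_{j_1}^{d_{j_1}}\cdots X_{j_k}^{d_{j_k}}$ that involve some $X_i$ with $i<j_1$: once a monomial is divisible by $X_{j_1}^{d_{j_1}}$ it already uses $X_{j_1}$, which lies below $X_{j_l}$ for every $l\ge 2$, so the defining condition of $B_{j_l}$ for $l\ge 2$ is automatic. The same subtraction argument then gives
\begin{equation*}
|B_{j_1}\cap\cdots\cap B_{j_k}| = \binom{n+d-(d_{j_1}+\cdots+d_{j_k})}{n} - \binom{n-j_1+d-(d_{j_1}+\cdots+d_{j_k})}{n-j_1}.
\end{equation*}

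To finish, I write $|\Delta(\mathcal{G})_d| = \binom{n+d}{n} - |\bigcup_{j=1}^n B_j|$ and expand $|\bigcup_{j=1}^n B_j|$ by inclusion–exclusion, producing precisely the alternating sum in the statement for $k=1,\ldots,n-1$. For the terminal $k=n$ summand only the tuple $(1,2,\ldots,n)$ contributes, and Pascal's identity $\binom{n+s}{n} - \binom{n-1+s}{n-1} = \binom{n-1+s}{n}$ applied with $s=d-(d_1+\cdots+d_n)$ rewrites the bracket as $\binom{n+d-(d_1+\cdots+d_n+1)}{n}$, matching the last displayed term. The main conceptual hurdle is justifying that the condition ``$M$ involves some $X_i$ with $i<j_l$'' is redundant for $l\ge 2$ inside $B_{j_1}\cap\cdots\cap B_{j_k}$; once that is established the rest is routine binomial bookkeeping.
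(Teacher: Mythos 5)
Your proof is correct and follows essentially the same route as the paper: the same identification of the leading monomials $X_iX_j^{d_j}$, the same sets (your $B_j$ are the paper's $\mathcal{M}_j$), the same collapsing of the intersections to ``divisible by $X_{j_1}^{d_{j_1}}\cdots X_{j_k}^{d_{j_k}}$ and involving some $X_i$ with $i<j_1$'', and the same inclusion--exclusion and binomial counts (the paper counts exponent vectors by stars and bars, you count monomials directly, and both conclude the last term with the same Pascal-type identity).
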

\begin{proof}
Observe that
$\Delta(\mathcal{G}) = 
\left\{
M \; : \; X_i X_j^{d_j} \nmid M, 0 \le i< j \le n \right\}.$
For $1 \le j \le n$, we define
$
\mathcal{M}_j :=
\left\{
M \; : \; \text{ there is } i, 0 \le i< j, X_i X_j^{d_j} \mid M
\right\}.$
Then
$
\Delta(\mathcal{G}) = \mathcal{M}_S -
\left(
\bigcup_{j=1}^n \mathcal{M}_j
\right),
$
where $\mathcal{M}_S$ is the set of all monomials in $S.$
Therefore, when we count the number of monomials of degree $d$ in $\Delta(\mathcal{G})$, from the inclusion-exclusion theorem we get 
\begin{align*}
\Delta(\mathcal{G})_d  =&  | \left({\mathcal{M}_S}\right)_d | - \sum_{j=1}^n 
| {(\mathcal{M}_j)}_d |
+ \sum_{j_1<j_2} | {(\mathcal{M}_{j_1} \cap \mathcal{M}_{j_2})}_d |  
 \\ & - \cdots
  +(-1)^k \sum_{j_1<j_2< \cdots <j_k}
| (\mathcal{M}_{j_1} \cap \mathcal{M}_{j_2} \cap \cdots \cap 
\mathcal{M}_{j_k})_d |  \\
&+ \cdots  +(-1)^n | (\mathcal{M}_{1} \cap \mathcal{M}_{2} \cap \cdots \cap 
\mathcal{M}_{n})_d | \, .
\end{align*}
Clearly 
$  |(\mathcal{M}_S)_d | = \binom{n+d}{n}.$
Let $j \in \{1, \ldots, n\}$ and let $M= X_0^{\alpha_0}. \cdots . X_n^{\alpha_n} \in (\mathcal{M}_j)_d$, then there exists $i<j$, such that
$\alpha_i \ge 1$ and $\alpha_j \ge d_j$. Taking $\beta_j = \alpha_j - d_j$ and
for $k \neq j$, $\beta_k = \alpha_k$, we have that $|(\mathcal{M}_j)_d |$ is 
the number of solutions
of $\beta_0 + \cdots +\beta_n = d - d_j,$
such that  $\beta_0 + \cdots +\beta_{j-1} \ge 1$. Then $| (\mathcal{M}_j)_d |$ 
is the number of solutions of
$\beta_0 + \cdots +\beta_n = d - d_j$ minus the number of solutions of
$\beta_j + \cdots +\beta_n = d - d_j.$ This means
$$
| (\mathcal{M}_j)_d | = \binom{n+d - d_j}{n}  - 
\binom{n - j +d - d_j}{n - j}  \, .
$$
Now let  $M= X_0^{\alpha_0}. \cdots .X_n^{\alpha_n} \in \ (\mathcal{M}_{j_1} \cap \cdots \cap \mathcal{M}_{j_k})_d$,
then there exists $i<j_1$, such that
$\alpha_i \ge 1$ and $\alpha_{j_w} \ge d_{j_w}$, for $1 \le w \le k$.
Taking $\beta_{j_w} = \alpha_{j_w} - d_{j_w}$, for $1 \le w \le k$, with 
 $l \neq j_w$ and $\beta_l=\alpha_l$, we get that $|(\mathcal{M}_{j_1} \cap 
 \cdots \cap \mathcal{M}_{j_k})_d |$
is the number of solutions of
$\beta_0 + \cdots +\beta_n = d - (d_{j_1} + \cdots + d_{j_k} )$
minus the number of solutions of
$\beta_{j_1} + \cdots +\beta_n = d - (d_{j_1} + \cdots + d_{j_k})$, hence
$$
| (\mathcal{M}_{j_1} \cap \cdots \cap \mathcal{M}_{j_k})_d | = \binom{n+ d - 
(d_{j_1} + \cdots + d_{j_k})}{n}  - 
\binom{n - j_1 + d - (d_{j_1} + \cdots + d_{j_k})}{n - j_1}  \, .
$$
For $k=n$ we have
\begin{align*}
\binom{n+ d - (d_{1} + \cdots + d_{n})}{n}  -&  \binom{n - 1 + d - (d_1 + 
\cdots 
+ 	d_n)}{n - 1} \\ 
=& \binom{n+ d - (d_{1} + \cdots + d_{n} + 1)}{n}.\qedhere
\end{align*}
\end{proof}
We use the next well-known combinatorial result
to check that $H_\mathcal{X}(d) = | \Delta(\mathcal{G})_d |$ for all $d \geq 0$.
\begin{lemma}\label{combinatorial}
Let $a,b$ be non-negative integers. Then
$ \sum_{j=0}^a \binom{j+b-1}{j} = \binom{a+b}{a}. $
\end{lemma}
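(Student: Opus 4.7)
The plan is to prove this by induction on $a$, using Pascal's identity $\binom{n}{k} + \binom{n}{k+1} = \binom{n+1}{k+1}$ together with the fact that $\binom{j+b-1}{j} = \binom{j+b-1}{b-1}$.

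For the base case $a = 0$, both sides equal $1$: the left side reduces to the single term $\binom{b-1}{0} = 1$, and the right side is $\binom{b}{0} = 1$.

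For the inductive step, assuming the identity holds for $a$, I would split off the last term:
\[
\sum_{j=0}^{a+1} \binom{j+b-1}{j} = \left(\sum_{j=0}^{a} \binom{j+b-1}{j}\right) + \binom{a+b}{a+1} = \binom{a+b}{a} + \binom{a+b}{a+1},
\]
and then apply Pascal's identity (in the form $\binom{a+b}{a} + \binom{a+b}{a+1} = \binom{a+b+1}{a+1}$) to conclude that the sum equals $\binom{(a+1)+b}{a+1}$, as required.

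There is essentially no obstacle here; the identity is a standard form of the hockey-stick identity, and the only care needed is to rewrite $\binom{a+b}{a} = \binom{a+b}{b}$ (if desired) so that Pascal's identity applies directly to the upper index. Alternatively, one could give a one-line combinatorial proof by viewing $\binom{a+b}{a}$ as counting the lattice paths from $(0,0)$ to $(a,b)$ and partitioning them according to the height at which the path first reaches the column $x = a$, but the inductive proof above is the shortest and matches the style of the paper.
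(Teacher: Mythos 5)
Your proof is correct. Note that the paper itself offers no argument for this lemma --- it is invoked as a ``well-known combinatorial result'' --- so there is no authorial proof to compare against; your induction on $a$ via Pascal's identity (equivalently, the hockey-stick identity) is the standard argument and fills the gap cleanly. The only point worth a word is the degenerate case $b=0$: there the $j=0$ term is $\binom{-1}{0}$, which must be read as $1$ under the usual convention for this identity to hold (in the paper the lemma is applied with $b=d$ in a range where this does not matter), so a one-line remark on that convention, or the observation that the base case of the induction is really $\binom{b-1}{0}=1$ for every $b\ge 0$ under that convention, would make the statement airtight.
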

%
\begin{proposition}\label{basedeG}
Let $\mathcal{X}=\left[A_0\times \cdots \times A_n\right]$ be a projective nested cartesian set. The set
$\mathcal{G}  = \left\{ X_i\prod_{a_j\in A_j}\left(X_j-a_j X_i\right) \; : \; i 
< j, \; \; i,j=0,\ldots,n \right\}$
is a Gr\"obner basis for $I(\mathcal{X})$.
\end{proposition}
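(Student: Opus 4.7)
The plan is to apply Lemma~\ref{grobner_pegada}. Since $\mathcal{G}$ generates $I(\mathcal{X})$ by Lemma~\ref{15-05-13}, it suffices to verify that $H_\mathcal{X}(d)=\#\Delta(\mathcal{G})_d$ for every $d\geq 0$. Both quantities are already given by explicit closed formulas, in Lemma~\ref{hil-fun} and Lemma~\ref{number_pegada}, so the task reduces to showing that these two formulas coincide identically as functions of $d$.

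I would reorganize the formula in Lemma~\ref{hil-fun} by swapping the two levels of summation. In that formula the outer index $j$ runs from $1$ to $n$, and for each $j$ the inner alternating sum is indexed by subsets $I\subseteq\{n+1-j,\ldots,n\}$ with summand $(-1)^{|I|}\binom{j+d-1-D_I}{d-1-D_I}$, where $D_I:=\sum_{i\in I}d_i$. Using $\binom{j+d-1-D_I}{d-1-D_I}=\binom{j+d-1-D_I}{j}$ and interchanging, for a non-empty subset $I\subseteq\{1,\ldots,n\}$ with minimum element $i_1$ the $j$-range becomes $n+1-i_1\leq j\leq n$, while for $I=\emptyset$ one has $1\leq j\leq n$.

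The crux is then to collapse each inner sum via the hockey-stick identity, which is precisely Lemma~\ref{combinatorial} in the form $\sum_{j=0}^{N}\binom{j+m}{j}=\binom{N+m+1}{N}$. Splitting the $j$-range as $[0,n]\setminus[0,n-i_1]$ and applying this identity twice gives
\[
\sum_{j=n+1-i_1}^{n}\binom{j+d-1-D_I}{j}=\binom{n+d-D_I}{n}-\binom{n-i_1+d-D_I}{n-i_1},
\]
while the $I=\emptyset$ contribution $\sum_{j=1}^n\binom{j+d-1}{j}$ becomes $\binom{n+d}{n}-1$; combined with the leading $+1$ in Lemma~\ref{hil-fun} this produces the $\binom{n+d}{n}$ term of Lemma~\ref{number_pegada}. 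All remaining terms match bijectively once the extremal case $I=\{1,\ldots,n\}$ (where $i_1=1$) is simplified via Pascal's identity $\binom{a}{n}-\binom{a-1}{n-1}=\binom{a-1}{n}$, yielding exactly $(-1)^n\binom{n+d-(d_1+\cdots+d_n+1)}{n}$ as stated in Lemma~\ref{number_pegada}.

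The main obstacle is purely combinatorial bookkeeping: tracking which subset $I\subseteq\{1,\ldots,n\}$ contributes to which $j$-range after the swap, and treating the two extremal subsets $I=\emptyset$ and $I=\{1,\ldots,n\}$ as special cases so that the two closed formulas line up term-for-term. No further algebraic or geometric input beyond this swap-and-collapse is required.
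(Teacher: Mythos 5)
Your proposal is correct and follows essentially the same route as the paper: invoke Lemma~\ref{grobner_pegada} (with generation from Lemma~\ref{15-05-13}), rewrite the Hilbert function of Lemma~\ref{hil-fun} by swapping the order of summation, collapse the resulting $j$-sums with Lemma~\ref{combinatorial}, and match the outcome term by term with Lemma~\ref{number_pegada}, treating $I=\emptyset$ and $I=\{1,\ldots,n\}$ separately. The only cosmetic difference is that you absorb the Pascal-identity simplification of the full-subset term into the comparison, whereas the paper performs it inside the proof of Lemma~\ref{number_pegada}.
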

\begin{proof}
From Lemma \ref{grobner_pegada} we only need to compare the formulas of Lemmas \ref{hil-fun} and \ref{number_pegada}. On the formula for the Hilbert Function, we distribute the sum, use Lemma \ref{combinatorial} and compare term by term
with the formula for the footprint. The first term is
$$	
1+\sum_{j=1}^n 
\binom{j+d-1}{d-1} = \sum_{j=0}^n 
\binom{j+d-1}{j} =
\binom{n+d}{n} \, ,
$$
the second term is 
\begin{align*}
\sum_{j=1}^n \sum_{i=n+1-j}^n \binom{j+d-1-d_i}{d-1-d_i} & = 
\sum_{i=1}^n \sum_{j=n+1-i}^n \binom{j+d-1-d_i}{j}
 \\
& =  
\sum_{j=1}^n \sum_{i=n+1-j}^n \binom{i+d-1-d_j}{i}
 \\
& =  
\sum_{j=1}^n\left(  \sum_{i=0}^n \binom{i+d-d_j-1}{i} -
                    \sum_{i=0}^{n-j} \binom{i+d-d_j-1}{i}
            \right)
 \\
& =  
\sum_{j=1}^n\left(  \binom{n+d-d_j}{n} -
                    \binom{n-j+d-d_j}{n-j}
            \right) \, ,
\nonumber
\end{align*}
and the general term is
\begin{align*}
&\sum_{j=1}^n \;\; \sum_{n+1-j \le i_1 < \cdots < i_k \le n} \; 
                 \binom{j+d-1-(d_{i_1} + \cdots + d_{i_k})}{d-1-(d_{i_1} + 
                 \cdots + d_{i_k})}  \\ &=
\sum_{1 \le i_1 < \cdots < i_k \le n} \; \; \sum_{j=n+1-i_1}^n 
                 \binom{j+d-1-(d_{i_1} + \cdots + d_{i_k})}{j}  \\ &=
\sum_{1 \le i_1 < \cdots < i_k \le n}
\left(
\binom{n+d-(d_{i_1} + \cdots + d_{i_k})}{n} - 
\binom{n-i_1 +d-(d_{i_1} + \cdots + d_{i_k})}{n-i_1}
\right).
\end{align*}
Finally, for the last term, the sum on the formula for the Hilbert function has only one term, and
$$
\binom{n+ d-1 - (d_{1} + \cdots + d_{n})}{d-1 - (d_{1} + \cdots + d_{n})}
=
\binom{n+ d - (d_{1} + \cdots + d_{n} + 1)}{n},$$
which proves the Proposition.
\end{proof}


\section{Minimum Distance}\label{14-09-13}

We start this section by presenting an upper bound for the minimum distance of 
 projective nested cartesian codes. Instead of $f(X_0, \ldots, X_n)$ we write simply $f(X)$ for a polynomial in $S$.

\begin{lemma}\label{desigualdade}  If $\mathcal{X}$ is the projective nested cartesian set over $A_0,\ldots,A_n,$ then
the minimum distance of $C_{\mathcal{X}}(d)$ satisfies
$\delta_{\mathcal{X}}(d)   \le  \left( d_{k+1}-\ell \right) d_{k+2}\cdots d_n$ if
$1 \le d \le \sum\limits_{i=1}^{n}\left( d_i-1 \right),$
and $\delta_{\mathcal{X}}(d) =  1$ in otherwise,
where $0\leq k\leq n-1$ and $0 \leq \ell < d_{k+1}-1$ are the unique integers such that 
$d - 1= \sum\limits_{i=1}^{k}\left(d_i-1\right)+\ell.$
\end{lemma}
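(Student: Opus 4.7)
The strategy is to reduce the problem to the affine cartesian code on $\mathcal{Y}=A_1\times\cdots\times A_n$ at order $d-1$, by constructing a codeword of $C_\mathcal{X}(d)$ whose support lies inside the affine chart $[1\times A_1\times\cdots\times A_n]\subset\mathcal{X}$ and whose restriction there realizes the minimum distance of $C^{*}_\mathcal{Y}(d-1)$. The key observation is that every point of $\mathcal{X}$ outside this chart lies on the hyperplane $X_0=0$, so any homogeneous polynomial divisible by $X_0$ automatically vanishes off the affine chart.

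Concretely, first suppose $1\le d\le\sum_{i=1}^n(d_i-1)$, and write $d-1=\sum_{i=1}^k(d_i-1)+\ell$ with $0\le\ell<d_{k+1}-1$. Applying Theorem~\ref{3.1e3.8}(2) to $C^{*}_\mathcal{Y}(d-1)$, I obtain a polynomial $g\in K[X_1,\dots,X_n]$ with $\deg g\le d-1$ whose evaluation on $\mathcal{Y}$ has weight exactly $(d_{k+1}-\ell)\,d_{k+2}\cdots d_n$. Let $g^{h}:=X_0^{d-1}\,g(X_1/X_0,\dots,X_n/X_0)\in S_{d-1}$ denote its standard homogenization and set
\[ f := X_0\, g^{h}\ \in\ S_d. \]
At any point of $\mathcal{X}$ with first coordinate $0$, the factor $X_0$ forces $f$ to vanish; at a point $(1:a_1:\cdots:a_n)$ of $\mathcal{X}$ one has $f(1,a_1,\dots,a_n)=g(a_1,\dots,a_n)$. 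Consequently $\|\varphi_d(f)\|$ coincides with the weight of $\phi_{d-1}(g)$ on $\mathcal{Y}$, which is $(d_{k+1}-\ell)\,d_{k+2}\cdots d_n$, and $\varphi_d(f)\ne 0$ since this number is positive. This yields the stated upper bound. The remaining case $d>\sum_{i=1}^n(d_i-1)$ is handled identically: Theorem~\ref{3.1e3.8}(2) then provides $g$ of affine weight $1$, so $f=X_0\,g^{h}$ produces a codeword of weight $1$, forcing $\delta_\mathcal{X}(d)=1$.

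There is no substantive obstacle: the content of the lemma is the passage from the projective to the affine setting, and once one observes that multiplication by $X_0$ restricts the support of the evaluation to the single chart $\{X_0=1\}$ at the cost of raising the degree by one, the bound is an immediate consequence of the affine minimum distance formula in Theorem~\ref{3.1e3.8}(2), with the parameters $k$ and $\ell$ computed from $d-1$ rather than from $d$. The only bookkeeping is verifying that $f\in S_d$, which is automatic since $g^{h}$ is homogeneous of degree $d-1$.
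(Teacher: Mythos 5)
Your proposal is correct and is essentially the paper's own argument: both arguments take a degree-$(d-1)$ polynomial attaining the affine minimum distance on $A_1\times\cdots\times A_n$ and multiply (its homogenization) by $X_0$ so that the resulting degree-$d$ codeword is supported only on the chart $X_0\neq 0$, where it has weight $\left(d_{k+1}-\ell\right)d_{k+2}\cdots d_n$ (or weight $1$ when $d>\sum_{i=1}^n(d_i-1)$). The only difference is presentational: the paper writes the extremal polynomial explicitly as $X_0\prod_{i=1}^{k}\prod_{a\in A_i,\,a\neq a_i}(X_i-aX_0)\prod_{a\in B_{k+1}}(X_{k+1}-aX_0)$ and counts its zeros directly, while you obtain it by invoking the sharpness of the affine minimum distance formula in Theorem \ref{3.1e3.8} and homogenizing.
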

\begin{proof}
For all $i = 0, \ldots, n$ choose $a_i \in A_i$. It is easy to see that the 
polynomial 
\[
f(X) = X_0 \prod_{i=1}^n \prod_{a \in A_i}^{a\neq a_i} ( X_i - a X_0)
\]
of degree $\sum\limits_{i=1}^{n}\left(d_i-1\right) + 1$
is zero for all points of $\mathcal{X}$ except $(1:a_1: \cdots : a_n)$. Thus  
for $d > \sum\limits_{i=1}^{n}\left(d_i-1\right)$ we get 
$\delta_{\mathcal{X}}(d) = 1$.
Let $B_{k+1} \subset A_{k+1}$ be a set with $\ell$ elements. For $d - 1= \sum\limits_{i=1}^{k}\left(d_i-1\right)+\ell$, taking
\[
 f(X) = X_0
\left( \prod_{i=1}^k \prod_{a \in A_i}^{a\neq a_i} ( X_i - a X_0)
\right)
\left( \prod_{a \in B_{k+1}} ( X_{k+1} - a X_0)
\right),
\]
we obtain the desired inequality.
\end{proof}

The next example, found by R.\ Villarreal, proves  that this upper bound is not 
reached in all cases, but we will prove
that it is the true value  
of the minimum distance in some special cases, which include the projective 
Reed-Muller codes (see Theorem \ref{dist_min_final}).

\begin{example} Let $K=\mathbb{F}_{4}$ be a finite field with $4$ elements and let
	$K_0=K_1=\mathbb{F}_2, K_2=\mathbb{F}_{4}$ be subsets of $K$. Then
	$\mathcal{X}=\left[K_0\times K_1\times K_2\right]$ is a projective nested cartesian product, and
	the minimum distance of the code $C_{\mathcal{X}}(d)$ is:
	\begin{eqnarray*}
		&&\left.
		\begin{array}{c|c|c|c|c}
			d & 1 & 2 & 3 & 4 \\ \hline
			\delta_{\mathcal{X}}(d) & 8 & 4 & 3 & 1 
		\end{array}\right.
	\end{eqnarray*}
Observe that for $d=4$, we have 
$d-1 = (2-1)+2$ and for
$f=X_2( X_2^3 + X_1^3 + X_0^3 + X_0^2X_1)$, we
have $w(f) = 1 < (d_{k+1}-\ell) = (4 - 2)=2$.
\end{example}

\begin{lemma} \label{neste_with_1}
Let $\mathcal{X} = \left[A_0\times\cdots\times A_n\right]$ be a projective nested cartesian set.
For all $j = 0, \ldots, n$ let $a_j \in A_j^{\neq 0}$ and define $B_j = a_j^{-1} A_j$. Then 
$\mathcal{Y} = \left[B_0\times \cdots\times B_n\right]$ is a projective nested cartesian set
such that $1 \in B_j$, for all $j=0,\ldots , n$, and
$C_\mathcal{X} (d) =C_\mathcal{Y} (d),$
for all degree $d$.
\end{lemma}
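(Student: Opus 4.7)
The plan is to verify the axioms of a nested cartesian set for $\mathcal{Y}$ (together with $1\in B_j$) and then to exhibit a $K$-linear isomorphism of $S_d$ that identifies evaluations on $\mathcal{X}$ with those on $\mathcal{Y}$. For the axioms, $0 = a_j^{-1}\cdot 0 \in B_j$ and $1 = a_j^{-1}a_j \in B_j$ are immediate from $0,a_j\in A_j$, giving condition (i) of Definition~\ref{24-03-13} and the claim $1 \in B_j$. By Remark~\ref{20-05-14}, condition (ii) is equivalent to (ii'), so for $j \geq i$ I check $B_j B_{i-1} = a_j^{-1} a_{i-1}^{-1} A_j A_{i-1} \subset a_j^{-1}(a_{i-1}^{-1} A_j)$ using (ii') for $\mathcal{X}$; then $a_{i-1}^{-1} A_j \subset A_j / A_{i-1} \subset A_j$ by (ii) for $\mathcal{X}$, whence $B_j B_{i-1} \subset a_j^{-1} A_j = B_j$.

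For the equality of codes, let $\Phi\colon S_d \to S_d$ be the $K$-linear isomorphism $\Phi(f)(X_0,\ldots,X_n) = f(a_0 X_0,\ldots,a_n X_n)$, and let $\psi\colon\mathbb{P}^n\to\mathbb{P}^n$ be the projective automorphism $\psi(x_0:\cdots:x_n) = (a_0^{-1}x_0:\cdots:a_n^{-1}x_n)$. For a canonical representative $P = (0,\ldots,0,1,x_{t+1},\ldots,x_n)\in\mathcal{X}$ with leading $1$ at position $t$ (where condition (ii) for $\mathcal{X}$ forces $x_j\in A_j$ for $j>t$), rescaling $\psi(P)$ by $a_t$ yields
\[
Q \;=\; (0,\ldots,0,1,\,a_t a_{t+1}^{-1} x_{t+1},\ldots,\,a_t a_n^{-1} x_n);
\]
condition (ii') for $\mathcal{X}$ gives $a_t x_j \in A_j$ for each $j > t$, so every nontrivial entry of $Q$ lies in $a_j^{-1} A_j = B_j$, and $Q$ is the canonical representative in $\mathcal{Y}$ of the projective point $\psi(P)$. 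The assignment $\tau(P):=Q$ is thus a bijection $\mathcal{X}\to\mathcal{Y}$, and by homogeneity $\Phi(f)(Q) = \Phi(f)(a_t\,\psi(P)) = a_t^d\, f(P)$.

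Consequently, the evaluation of $\Phi(f)$ on $\mathcal{Y}$ differs from the evaluation of $f$ on $\mathcal{X}$ only by the permutation $\tau$ and by coordinate-wise multiplication by the nonzero scalars $a_{t_i}^d$. Since $\Phi$ is a $K$-linear bijection of $S_d$ and coordinate-wise scaling by nonzero constants preserves linear subspaces, this yields $C_{\mathcal{X}}(d) = C_{\mathcal{Y}}(d)$ once the coordinates are identified via $\tau$. The delicate step is precisely this canonicalization of $\psi(P)$ back to a standard representative of $\mathcal{Y}$, which is handled by the very nested axiom (ii') used to show that $\mathcal{Y}$ is a nested cartesian set in the first place.
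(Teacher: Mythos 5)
Your verification that $\mathcal{Y}$ is a projective nested cartesian set with $1\in B_j$ is fine (the paper takes this part for granted), and your substitution $\Phi(f)(X)=f(a_0X_0,\ldots,a_nX_n)$ is exactly the paper's device. The gap is in your last inference. From $\Phi(f)(\tau(P))=a_t^{d}f(P)$ you conclude equality of the codes ``since coordinate-wise scaling by nonzero constants preserves linear subspaces.'' Scaling by the diagonal matrix $D=\mathrm{diag}(a_{t_1}^{d},\ldots,a_{t_m}^{d})$ (where $t_i$ is the position of the leading $1$ of $P_i$) maps $C_{\mathcal{X}}(d)$ onto \emph{a} linear subspace, but not onto the \emph{same} one: what you have actually proved is $C_{\mathcal{Y}}(d)=D\bigl(C_{\mathcal{X}}(d)\bigr)$ after identifying coordinates via $\tau$, i.e.\ monomial equivalence, not equality. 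And literal equality can genuinely fail: take $K=\mathbb{F}_7$, $n=1$, $A_0=\{0,1\}$, $A_1=\{0,1,3\}$, $a_0=1$, $a_1=3$, $d=1$. Then $B_1=\{0,1,5\}$, the points of $\mathcal{X}$ are $(1,0),(1,1),(1,3),(0,1)$, and with coordinates ordered by $\tau$ one gets $C_{\mathcal{X}}(1)=\{(c_0,\,c_0+c_1,\,c_0+3c_1,\,c_1)\}$ while $C_{\mathcal{Y}}(1)=\{(e_0,\,e_0+5e_1,\,e_0+e_1,\,e_1)\}$; the codeword $(0,1,3,1)$ of $X_1$ lies in the first but not in the second, the two subspaces of $K^4$ being related precisely by $D=\mathrm{diag}(1,1,1,3)$. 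So the scalars you uncovered cannot be argued away.

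You should know, however, that the paper's own proof takes essentially the same route and hides the same issue: it evaluates $g(X)=f(a_0X_0,\ldots,a_nX_n)$ at the representatives $(a_0^{-1}x_0,\ldots,a_n^{-1}x_n)$, which are not the normalized representatives (zeros to the left, first nonzero entry $1$) used to define the code, and so it never sees the factors $a_t^{d}$ that your more careful bookkeeping exposes. The defensible conclusion of both arguments is that $C_{\mathcal{X}}(d)$ and $C_{\mathcal{Y}}(d)$ are monomially equivalent --- same length, dimension and weight distribution, hence same minimum distance --- which is all that the lemma is used for later (to assume $1\in A_j$ when determining the parameters). So either weaken the stated conclusion to equivalence of codes, or make explicit the identification of coordinates and accept that strict equality does not hold in general; as a proof of the statement as literally written, your final step (and the paper's) does not go through.
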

\begin{proof}
Let $\mathcal{X}=\{ P_1 , \ldots , P_m \}$ and $\mathcal{Y} = \{ Q_1 , \ldots , Q_m \}$, where $P_i= ( x_0: \cdots: x_ n)$ and 
$Q_i= (a_0^{-1} x_0 , \ldots , a_n^{-1} x_ n)$ for all $i = 0, \ldots, n$.
Let $v\in C_\mathcal{X} (d)$, then $v=(f(P_1) : \cdots: f(P_m))$ for some $f \in S_d$.
Define $g(X_0, \ldots , X_n) = f(a_0 X_0, \ldots , a_n X_n) \in S_d$. It is easy to see that
$v = (g(Q_1) : \cdots: g(Q_m))$, so that $C_\mathcal{X} (d)  \subset C_\mathcal{Y} (d)$.
The proof of $C_\mathcal{Y} (d)  \subset C_\mathcal{X} (d)$ is similar.
\end{proof}

Thus we see that one may always assume that $1 \in A_j$, for all $j = 0, \ldots, n$. We present now the special class of projective nested cartesian set for whose associated codes we will determine the minimum distance. 

%
%

\begin{definition} \label{proj_nest_fields}
Let $K_0\subset \cdots \subset K_n$ be subfields of $K$, with $|K_i|=d_i$ for all $0 \le i \le n$. Observe that
$d_{i+1} = d_i^{r_i}$, for some $r_i \ge 1$ and $q=d_n^{r_n}$. Then
$\mathcal{X}=\left[K_0\times\cdots\times K_n\right]$ is a projective nested cartesian set which
is called a {\it projective nested product of fields.}
\end{definition}


Clearly $\mathbb{P}^n$ is a projective nested product of fields, so our results on codes defined over such sets extend the results on projective Reed-Muller codes.


\begin{definition}
For a set $\mathcal{A} \subset \mathcal{X}$ and $f \in S_d \backslash I(\mathcal{A})$, define
$$
Z_{\mathcal{A}}(f) := \left\{P \in \mathcal{A} \; : \;f(P) = 0 \right\}.
$$
In this way, for a codeword $v = (f(P_1), \ldots , f(P_m)) \neq 0,$
where $f(X) \in S_d \backslash I(\mathcal{X})_d,$
the weight of $v$ is
$| \mathcal{X} \backslash Z_{\mathcal{X}}(f) |,$
and the minimum distance of $C_\mathcal{X} (d)$ is 
$$
\delta_\mathcal{X} (d) = \text{min}\,
\left\{
| \mathcal{X} \backslash Z_{\mathcal{X}}(f) | \; : \;
f \in S_d \backslash I(\mathcal{X})_d
\right\}.$$
\end{definition}

\begin{lemma} \label{divide}
Let $f$ be an element of $S_d$ such that
for all $t \le j \le n$ we have
$
Z_{\mathcal{X}}(X_j) \subset Z_{\mathcal{X}}(f).
$
Then there exists $g_t(X)$ in $S_{d-(n-t+1)}$ such that
$f - g_t \cdot X_t \cdots X_n \in I(\mathcal{X})$.
\end{lemma}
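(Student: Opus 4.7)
The plan is by induction on $k = n - t + 1$, the number of variables to be extracted. For the base case $k = 1$ (so $t = n$), I would decompose $f = X_n h + r$ as polynomials with $r \in K[X_0, \ldots, X_{n-1}]$ free of $X_n$. Using the hypothesis $Z_{\mathcal{X}}(X_n) \subset Z_{\mathcal{X}}(f)$, the polynomial $r$ must vanish at every normalized representative of a point of $Z_{\mathcal{X}}(X_n)$. Viewed inside the hyperplane $\{X_n = 0\} \cong \mathbb{P}^{n-1}$, the set $Z_{\mathcal{X}}(X_n)$ is precisely the projective nested cartesian set $[A_0 \times \cdots \times A_{n-1}]$; by Lemma~\ref{15-05-13} its vanishing ideal is generated by the polynomials $X_i \prod_{a_j \in A_j}(X_j - a_j X_i)$ for $0 \le i < j \le n-1$ — but each of these generators also belongs to $I(\mathcal{X})$. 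Hence $r \in I(\mathcal{X})$, so $g_n := h$ satisfies $f - g_n X_n = r \in I(\mathcal{X})$; verifying $g_n \in \widetilde{S}_{d-1}$ reduces to plugging this decomposition into the scaling identity for $f$ and using the obvious scaling of $X_n$.

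For the inductive step ($k \geq 2$), I would apply the inductive hypothesis at $t + 1$ to obtain $g_{t+1} \in \widetilde{S}_{d-k+1}$ with $f - g_{t+1} X_{t+1} \cdots X_n \in I(\mathcal{X})$, and then apply the base-case argument (which works symmetrically with any variable in place of $X_n$) to $g_{t+1}$ in order to pull out $X_t$. This requires first verifying $Z_{\mathcal{X}}(X_t) \subset Z_{\mathcal{X}}(g_{t+1})$. At a point $P \in Z_{\mathcal{X}}(X_t)$ with every $X_j(P) \neq 0$ for $j > t$, the identity $f \equiv g_{t+1} X_{t+1} \cdots X_n \pmod{I(\mathcal{X})}$ together with $f(P) = 0$ immediately forces $g_{t+1}(P) = 0$. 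For the "degenerate" points where some $X_{j_0}(P) = 0$ with $j_0 > t$, I plan to invoke the hypothesis that $f$ also vanishes on $Z_{\mathcal{X}}(X_{j_0})$ together with the scaling property of $g_{t+1} \in \widetilde{S}$: by perturbing the zero coordinate at position $j_0$ to a nonzero element of $A_{j_0}$ (which keeps the resulting point inside $\mathcal{X}$ by condition (ii') of Remark~\ref{20-05-14}), one lands on a companion point still in $Z_{\mathcal{X}}(X_t)$ but with fewer vanishing coordinates; iterating this and transferring back via the scaling identity of $g_{t+1}$ gives $g_{t+1}(P) = 0$.

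Once the inclusion $Z_{\mathcal{X}}(X_t) \subset Z_{\mathcal{X}}(g_{t+1})$ is in hand, the base case applied to $g_{t+1}$ with $X_t$ in place of $X_n$ produces $g_t \in \widetilde{S}_{d-k}$ with $g_{t+1} - g_t X_t \in I(\mathcal{X})$; multiplying by $X_{t+1} \cdots X_n$ and combining with the earlier congruence finishes the proof. The main obstacle, I expect, is the degenerate case above: extending the generic vanishing of $g_{t+1}$ from $Z_{\mathcal{X}}(X_t) \cap \{X_{t+1} \cdots X_n \neq 0\}$ to all of $Z_{\mathcal{X}}(X_t)$ is what forces the use of the full multi-variable hypothesis together with the scaling structure built into $\widetilde{S}$, and the perturbation-plus-scaling trick is the delicate step that needs to be handled carefully.
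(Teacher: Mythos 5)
Your base case is fine and is essentially the paper's first step: the remainder of division by $X_n$ does not involve $X_n$, vanishes on the slice $X_n=0$, hence lies in the vanishing ideal of $[A_0\times\cdots\times A_{n-1}]$, whose generators also lie in $I(\mathcal{X})$. The gap is in your inductive step, where you require the pointwise inclusion $Z_{\mathcal{X}}(X_t)\subset Z_{\mathcal{X}}(g_{t+1})$. That inclusion is in general false for the $g_{t+1}$ the induction hands you, and the perturbation-plus-scaling trick cannot establish it: the scaling identity defining $\widetilde{S}$ only relates values of $g_{t+1}$ at different representatives of the \emph{same} projective point, so vanishing at the companion point $P'$ (obtained by replacing a zero coordinate $x_{j_0}$, $j_0>t$, by a nonzero element) gives no information about the value at $P$, which is a different point of $\mathbb{P}^n$. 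Concretely, take $\mathcal{X}=\mathbb{P}^2$ over $\mathbb{F}_q$, $t=1$, and $f=X_1X_2+X_0X_2^{q}-X_0^{q}X_2\in\widetilde{S}_{q+1}$; then $Z_{\mathcal{X}}(X_1)\cup Z_{\mathcal{X}}(X_2)\subset Z_{\mathcal{X}}(f)$, and division by $X_2$ produces $g_2=X_1+X_0X_2^{q-1}-X_0^{q}\in\widetilde{S}_q$ with $f-g_2X_2=0$. This $g_2$ vanishes at every point of $Z_{\mathcal{X}}(X_1)$ \emph{except} the degenerate point $(1:0:0)$, where it equals $-1$. So the inclusion you need fails, and no perturbation argument can recover it, even though the conclusion of the lemma still holds there (e.g.\ $g_1=X_2^{q-1}$ works).

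The paper's proof avoids this by never claiming that the quotient itself vanishes on the slice. In its descending step it writes $g_{\alpha}=g_{\alpha-1}X_{\alpha-1}+\tilde h_{\alpha-1}$ with $\tilde h_{\alpha-1}$ free of $X_{\alpha-1}$ and proves only that the product $\tilde h_{\alpha-1}X_{\alpha}\cdots X_n$ lies in $I(\mathcal{X})$: at any point $P$ with $x_{\alpha-1}=0$ one has $f(P)=0$ and the previously accumulated error $h_\alpha(P)=0$, hence $g_\alpha(P)\,x_\alpha\cdots x_n=0$, and $g_\alpha(P)=\tilde h_{\alpha-1}(P)$ because $x_{\alpha-1}=0$; so the product vanishes on the whole slice, and since it does not involve $X_{\alpha-1}$ it belongs to the ideal of the nested cartesian set with $A_{\alpha-1}$ deleted, which sits inside $I(\mathcal{X})$. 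The degenerate points where some later coordinate vanishes cause no trouble precisely because the tail monomial $X_\alpha\cdots X_n$ is kept attached and kills the product there anyway. Your argument can be repaired by replacing your claimed inclusion with this ``multiply by the tail monomial first'' step; as written, the delicate point you flagged is exactly where the proof breaks.
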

\begin{proof}
Write $f = g_n X_n + h_n$, where 
$h_n \in K[X_0,\ldots , X_{n-1}]_d$. 
For any $P=(x_0:\cdots : x_{n-1}:0) \in \mathcal{X}$, we have $f(P)=0$. This implies that 
$h_n \in I([K_0 \times \cdots \times K_{n-1}])$, and a fortiori we have $h_n \in I(\mathcal{X})$.
By induction on $\alpha$, suppose that for some $t+1 \le \alpha \le n$ we have
$f=g_\alpha X_\alpha \cdots X_n + h_\alpha,$
where $h_\alpha \in I(\mathcal{X})$.
Write $g_\alpha  = g_{\alpha-1} X_{\alpha-1} + \tilde{h}_{\alpha-1}$,
where $\tilde{h}_{\alpha-1} \in K[X_0,\ldots , X_{\alpha-2},X_\alpha \ldots , X_n]$.
For any $P=(x_0:\cdots :x_{\alpha-2}:0:x_\alpha : \cdots : x_n) \in \mathcal{X}$, we have $f(P)=0$.
This implies
$(\tilde{h}_{\alpha-1}X_\alpha \cdots X_n) (P)=0,$
which means
$\tilde{h}_{\alpha-1} X_\alpha \cdots X_n \in 
   I([K_0 \times \cdots \times K_{\alpha-2} 
          \times K_\alpha \times  \cdots \times K_n])\subset I(\mathcal{X})$.        
We have then
\[
 f=g_{\alpha-1} X_{\alpha-1} \cdots X_n + \tilde{h}_{\alpha-1} X_\alpha \cdots 
 X_n+h_\alpha,
\]
where  $\tilde{h}_{\alpha-1} X_\alpha \cdots X_n+h_\alpha \in I(\mathcal{X})$.
By induction on $\alpha$, our result is proved. It is easy to see that
$g_t \in S_{d-(n-t+1)}$.
\end{proof}
	

\begin{proposition}\label{dist_min}  Let $\mathcal{X}$ be the projective nested product of fields over $K_0,\ldots,K_n,$
and let $f \in S_d \backslash I(\mathcal{X})$ be a 
homogeneous polynomial on $S$ of degree $d$, with
$1 \leq d \leq \sum\limits_{i=1}^{n}\left(d_i-1\right)$
if $d_1 = \cdots = d_n$, or $1 \leq d < d_{r + 1}$ 
if there exists a positive integer $r$ such that
$d_1= d_r < d_{r+1}$. Then
\begin{equation*} \label{dist_min_eq}
| \mathcal{X} \backslash Z_{\mathcal{X}}(f) | \ge \left(d_{k+1}-\ell\right)d_{k+2}\cdots d_n\, ,
\end{equation*}
where $0\leq k\leq n-1$ and $0 \leq \ell < d_{k+1}-1$ are the unique integers such that
$d-1 = \sum\limits_{i=1}^{k}\left(d_i-1\right)+\ell.$
\end{proposition}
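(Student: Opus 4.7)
The plan is to proceed by induction on $n$, using the decomposition $\mathcal{X}=\mathcal{X}^{\mathrm{aff}}\sqcup\mathcal{X}^\infty$, where $\mathcal{X}^{\mathrm{aff}}=[1\times K_1\times\cdots\times K_n]$ is the affine chart $X_0=1$ (an affine cartesian set of size $d_1\cdots d_n$) and $\mathcal{X}^\infty=\mathcal{X}\cap\{X_0=0\}$ is identified with the smaller projective nested product of fields $[K_1\times\cdots\times K_n]\subset\mathbb{P}^{n-1}$, to which the induction hypothesis will apply. The base case $n=1$ is a direct count on $\mathcal{X}=\{(1{:}a):a\in K_1\}\cup\{(0{:}1)\}$: the polynomial $f(1,X_1)$ is univariate of degree at most $d<d_1-1<d_1$, and combining ordinary root-counting with the pseudo-homogeneity of $f$ at $(0{:}1)$ produces the required $d_1-\ell$ nonzero evaluations; in the sub-case where $f(1,X_1)\equiv 0$ on $K_1$, the strict inequality $d<d_1-1$ forces $f=X_0h$ and then $f(0,1)\ne0$ follows from $f\notin I(\mathcal{X})$ together with pseudo-homogeneity.

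For the induction step, I split into two cases according to whether $f$ vanishes on $\mathcal{X}^\infty$. First, if $f\in I(\mathcal{X}^\infty)$, reduce $f$ modulo the Gr\"obner basis $\mathcal{G}$ of $I(\mathcal{X})$ established earlier in the paper; the explicit description of the footprint $\Delta(\mathcal{G})$ (monomials $X_0^{\alpha_0}\cdots X_n^{\alpha_n}$ with $X_iX_j^{d_j}\nmid\cdot$ for $i<j$) combined with vanishing on $\mathcal{X}^\infty$ forces the reduced polynomial to be divisible by $X_0$, so $f\equiv X_0 g\pmod{I(\mathcal{X})}$ for some $g\in S_{\leq d-1}$. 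Since $f\notin I(\mathcal{X})$, the dehomogenization $g(1,X_1,\ldots,X_n)$ is a nonzero polynomial of degree at most $d-1$ on $K_1\times\cdots\times K_n$, and Theorem \ref{3.1e3.8}(2) applied to degree $d-1$ gives exactly $(d_{k+1}-\ell)d_{k+2}\cdots d_n$ nonzeros on $\mathcal{X}^{\mathrm{aff}}$ (the relation $d-1=\sum_{i=1}^k(d_i-1)+\ell$ is precisely the affine-cartesian decomposition at degree $d-1$), matching the target.

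In the second case, where $f$ is not identically zero on $\mathcal{X}^\infty$, apply the induction hypothesis to $f$ restricted to $\mathcal{X}^\infty$ (the restriction lies in $\widetilde{S}_d$ for the smaller set because pseudo-homogeneity at the strata with $i\ge 1$ is inherited) to bound its nonzeros from below, and combine with the affine-cartesian bound of Theorem \ref{3.1e3.8}(2) applied to $f(1,X_1,\ldots,X_n)$ of degree at most $d$ on $\mathcal{X}^{\mathrm{aff}}$. The final step is then to verify, via Lemma \ref{2.1}, that the sum of these two contributions is at least $(d_{k+1}-\ell)d_{k+2}\cdots d_n$, which requires a subcase split according to whether $\ell+1<d_{k+1}-1$ (generic) or $\ell=d_{k+1}-2$ (boundary where the affine bound on $\mathcal{X}^{\mathrm{aff}}$ already saturates the target).

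The main obstacle will be this last arithmetic verification: in the generic subcase the affine bound on $\mathcal{X}^{\mathrm{aff}}$ alone falls short of the target by precisely one block $d_{k+2}\cdots d_n$, which the inductive bound on $\mathcal{X}^\infty$ must make up, and this must be traced through Lemma \ref{2.1} cleanly. Two further technical issues must be handled: (i) the degenerate sub-possibility that $f(1,X_1,\ldots,X_n)$ vanishes identically on $\mathcal{X}^{\mathrm{aff}}$, which is dispatched by invoking Lemma \ref{divide} to factor out further coordinate variables $X_t\cdots X_n$ and reduce to a lower-degree instance on fewer strata; and (ii) the boundary where $d\ge\sum_{j=2}^n(d_j-1)$, in which the inductive lower bound on $\mathcal{X}^\infty$ degenerates to $1$ (via the trivial part of Lemma \ref{desigualdade}) and the full count must be recovered from $\mathcal{X}^{\mathrm{aff}}$ using the affine bound together with the single extra nonzero on $\mathcal{X}^\infty$.
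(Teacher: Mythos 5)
Your decomposition $\mathcal{X}=\mathcal{X}^{\mathrm{aff}}\sqcup\mathcal{X}^{\infty}$ is the same one the paper uses, and your treatment of the base case and of the case $f\in I(\mathcal{X}^{\infty})$ (factor out $X_0$, apply Theorem \ref{3.1e3.8}(2) at degree $d-1$) is sound and essentially matches the paper. But the heart of the proposition is the mixed case, where $f$ vanishes identically on neither piece, and there your plan has a genuine gap: the two bounds you propose to add do not reach the target whenever $\ell+d_1>d_{k+1}$ (with $k<n-1$). Indeed, the affine bound at degree $d$ on $\mathcal{X}^{\mathrm{aff}}$ gives at most a guarantee of $(d_{k+1}-\ell-1)d_{k+2}\cdots d_n$ (or less, after carrying), leaving a deficit of one block $d_{k+2}\cdots d_n$; but the inductive bound on $\mathcal{X}^{\infty}$ at degree $d$ uses the decomposition $d-1=\sum_{i=2}^{k}(d_i-1)+(\ell+d_1-1)$, and when $\ell+d_1>d_{k+1}$ this carries into $d_{k+1}$ and yields only $(d_{k+2}-(\ell+d_1-d_{k+1}))d_{k+3}\cdots d_n<d_{k+2}\cdots d_n$. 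This is not a boundary anomaly controlled by your subcase split on $\ell+1<d_{k+1}-1$ versus $\ell=d_{k+1}-2$: for projective Reed--Muller codes (all $d_i=q$) it occurs for every $\ell\geq 1$. A concrete instance: $n=2$, $q=3$, $d=2$; the target is $6$, the affine bound at degree $2$ gives $3$, induction on $\mathbb{P}^1(\mathbb{F}_3)$ at degree $2$ gives $2$, and $3+2=5<6$. No amount of massaging via Lemma \ref{2.1} fixes this, because the failure is in the bounds being added, not in the arithmetic comparison.

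This is precisely why the paper, after disposing of the easy subcases ($k=n-1$, and $\ell+d_1\leq d_{k+1}$) exactly as you would, switches to a S{\o}rensen-type argument for the regime $\ell+d_1>d_{k+1}$: it averages over the family $\Pi$ of hyperplanes $Z(a_0X_0+\cdots+a_{n-1}X_{n-1}+X_n)$, double-counting incidences $(P,\pi)$ with $P\notin Z_{\mathcal{X}}(f)$, $P\in\pi$, and using the induction hypothesis on each hyperplane section (projectively equivalent to $[K_0\times\cdots\times K_{n-1}]$); and when some $\pi\cap\mathcal{X}$ is entirely contained in $Z_{\mathcal{X}}(f)$, it applies a projectivity, invokes Lemma \ref{divide} to factor $f\equiv gX_n$ (or $g X_t\cdots X_n$), and re-counts with Theorem \ref{3.1e3.8} and Lemma \ref{2.1}, with a further delicate split according to whether $d_{k+1}<d_n$ or $d_{k+1}=d_n$. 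None of this machinery appears in your proposal, and your only invocation of Lemma \ref{divide} (for the subcase where $f(1,X_1,\ldots,X_n)$ vanishes on all of $\mathcal{X}^{\mathrm{aff}}$) does not apply as stated, since that lemma requires vanishing on the coordinate hyperplane sections $Z_{\mathcal{X}}(X_j)$, not on the affine chart; the paper instead handles $f\in I(\mathcal{Y}_n^{*})$ by reducing modulo the Gr\"obner basis $\{X_j^{d_j}-X_jX_0^{d_j-1}\}$ and transferring the count to the part at infinity. So the proposal would need the hyperplane-averaging argument (or a substitute for it) to close the main case.
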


\begin{proof}
We will make an induction on $n$. Let $n=1$ and
$f \in  S_d\backslash I(\mathcal{X})$, where $1 \leq d \leq d_1 - 1$. Then $f$
has no more than $d$ roots in $\mathbb{P}^1$ and a fortiori in
$\mathcal{X}=[K_0\times K_1]$. Thus anyway we have  
$$
|\mathcal{X} \backslash Z_\mathcal{X} (f) | \ge (d_1+1) - d = d_1 - (d-1)\, .
$$
So now we assume that the statement of the theorem holds for the product 
$[K_0\times K_1\times\cdots\times K_{n- 1}]$.
Define
\[
\mathcal{Y}_n^{*} = [1\times K_1 \times \cdots \times K_n] \textrm{\ and\ }
\mathcal{Y}_{n-1} = [0 \times K_1 \times \cdots \times K_n],\]
in particular $\mathcal{X} = \mathcal{Y}_n^{*} \cup \mathcal{Y}_{n-1}$.
Let $f \in S_d \backslash I(\mathcal{X})$ be a polynomial of degree $d$, with 
$d$ as in the statement of the proposition. 

Suppose firstly that  
$f \in I(\mathcal{Y}_n^{*})$ (so $f \notin I(\mathcal{Y}_{n-1})$). From Theorem 
\ref{2.5} (and the fact that $K_j$ is a finite field with $d_j$ elements, for 
$j = 1, \ldots, n$)  we get that $I(\mathcal{Y}_n^{*})$ is generated by  
$\tilde{\mathcal{G}} = \{X_j^{d_j}-X_j X_0^{d_j-1} \, | \, j = 1, \ldots, n \}$. Endowing $S$ with a graded-lexicographic order $\prec$ such that $X_0 \prec X_1 \prec \cdots \prec X_n$ we get that ${\rm lm}(X_j^{d_j}-X_j X_0^{d_j-1}) = X_j^{d_j}$, for all $j = 1, \ldots, n$. Thus any pair of these leading monomials are coprime, so  
$\tilde{\mathcal{G}}$ is a Gr\"obner basis for $I(\mathcal{Y}_n^{*})$, with
respect to $\prec$ (see \cite[p.\ 104]{iva}). Dividing $f$ by the elements of 
$\tilde{\mathcal{G}}$ we find
 homogeneous 
polynomials $g_j$ 
 such that 
$f(X) = \sum_{j=1}^{n} g_j(X) (X_j^{d_j}-X_j X_0^{d_j-1})$. Observe that if 
$d_1 = \cdots = d_n$ then $g_j$, if nonzero, has degree $d - d_j$ ($j = 1, 
\ldots, n$). On the other hand, if
$d_1 = d_r < d_{r+1}$, then from $d < d_{r+1}$ we have $g_j=0$ for $j>r$, 
and for $j \in \{1, \ldots, r$ we have that $g_j$, if nonzero,  is of 
has degree $d - d_j$. 
Define $g(X) := \sum_{j=1}^{n} g_j(X) X_j,$
which is a 
homogeneous  
polynomial of degree $\tilde{d} = d-d_1 + 1$. Observe that
$g\mid_{\mathcal{Y}_{n-1}} = f\mid_{\mathcal{Y}_{n-1}}$ and
since $f \notin I(\mathcal{Y}_{n-1}),$ we must have  $g \notin I(\mathcal{Y}_{n-1}),$ and
as $\tilde{d}-1 = d -1 - (d_1 - 1) = \sum\limits_{i=2}^{k}\left(d_i-1\right)+\ell$, we can apply the induction hypothesis obtaining 
$$
|\mathcal{X} \backslash Z_\mathcal{X} (f) | =
|\mathcal{Y}_{n-1} \backslash Z_{\mathcal{Y}_{n-1}} (g) |
 \ge
\left(d_{k+1}-\ell\right)d_{k+2}\cdots d_n.
$$
This proves the proposition in the case where 
$f \in I(\mathcal{Y}_n^{*})$ and $f \notin I(\mathcal{Y}_{n-1})$.

Suppose now that $f \in I(\mathcal{Y}_{n-1})$ and $f \notin 
I(\mathcal{Y}_n^{*})$, and write $f = h + X_0 g$ where $h(X) = f(0, X_1, 
\ldots, X_n)$. Since $f|_{\mathcal{Y}_{n-1}} = 0$ we have 
$h|_{\mathcal{Y}_{n-1}} = 0$ and a fortiori $h|_{\mathcal{Y}_n^{*}} = 0$ so $h \in I(\mathcal{X})$. Observe that $f|_{\mathcal{Y}_n^{*}} = g|_{\mathcal{Y}_n^{*}}$ and
clearly the number of zeros of $g$ in $\mathcal{Y}_n^{*}$ is the same of the 
number of zeros of $g(1, X_1, \ldots, X_n)$ in the  cartesian product $K_1 
\times \cdots \times K_n$. Since $\deg(g(1, X_1, \ldots, X_n)) \leq d - 1$  a 
lower bound for the number of nonzeros of $g$ in $\mathcal{Y}_n^{*}$ may be 
obtained from  Theorem \ref{3.1e3.8}, and we have
%
$$
|\mathcal{X} \backslash Z_{\mathcal{X}} (f) | = |\mathcal{Y}_n^{*} \backslash Z_{\mathcal{Y}_n^{*}} (g) |
                                             \ge
\left(d_{k+1}-\ell \right)d_{k+2}\cdots d_n  \, .
$$

Finally suppose that $f \notin I(\mathcal{Y}_n^{*})$ and
$f \notin I(\mathcal{Y}_{n-1})$.

For $k=n-1$, i.e.\ when $d=  \sum\limits_{i=1}^{n-1}\left(d_i-1\right)+\ell+1$, we have  
$$
|\mathcal{Y}_n^{*} \backslash Z_{\mathcal{Y}_n^{*}} (f) |  \ge
d_n-\ell-1  
$$
since, as above, we may consider the number of nonzero points of $f(1, X_1, \ldots, X_n)$ in $K_1 \times \cdots \times K_n$
and use Theorem \ref{3.1e3.8}.
From $f \notin I(\mathcal{Y}_{n-1})$ we get 
$$
|\mathcal{Y}_{n-1} \backslash Z_{\mathcal{Y}_{n-1}} (f) | \ge 1\, ,
$$
which implies
$$
|\mathcal{X} \backslash Z_{\mathcal{X}} (f) |  \ge
d_n-\ell  
$$
and settles the case $k = n - 1$.
We treat now the case $k < n-1$, and we start by assuming that $\ell + d_1 \le 
d_{k+1}$.

We have that $d= \sum\limits_{i=1}^{k}\left(d_i-1\right)+\ell + 1$ and
$d - 1 = \sum\limits_{i=2}^{k}\left(d_i-1\right)+\ell + d_1 - 1$, then from 
Theorem \ref{3.1e3.8} (2) we get that 
\[
|\mathcal{Y}_n^{*} \backslash Z_{\mathcal{Y}_n^{*}} (f) |   \ge 
(d_{k+1}-\ell-1)d_{k+2} \cdots d_n \, ,
\]
and from the induction hypothesis we get 
\[
|\mathcal{Y}_{n-1} \backslash Z_{\mathcal{Y}_{n-1}} (f) |   \ge 
(d_{k+1}-(\ell+d_1-1))d_{k+2} \cdots d_n \ge d_{k+2} \cdots d_n\, .
\]
Adding both inequalities we obtain the desired result. 

From now on we can assume that 
\[
f \notin I(\mathcal{Y}_n^{*}),
 f \notin I(\mathcal{Y}_{n-1}), 0\le k<n-1 \textrm{\ and\ } \ell+d_1 > d_{k+1},
\]
in particular $\ell \ge 1$.	
In what follows we generalize some methods used by S{\o}rensen \cite{sorensen} to treat projective Reed-Muller codes. Define the set of hyperplanes
$$
\Pi := \{ \pi = Z(h) \subset \mathbb{P}^n \; : \; h =a_0X_0 + \cdots + 
a_{n-1}X_{n-1} + X_n \in K_n[X]\}.
$$
For all $\pi \in \Pi$, we want to estimate $| (\pi \cap \mathcal{X}) \backslash Z_{\mathcal{X}}(f)) |$.

For each 
$h = a_0X_0 + \cdots + a_{n-1}X_{n-1} + X_n$,
define $H \colon \mathbb{P}^n \to \mathbb{P}^n$ by
$$
H(x_0,\ldots , x_n) = (x_0:\cdots : x_{n-1}:h(x_0,\ldots , x_n)) \, .
$$
It is easy to see that $H$ is a projectivity that induces a bijection of $\mathcal{X}$ and sends the plane $\pi$
to the plane $Z(X_n)$, in fact
$$
P \in \pi = Z(h) \Longleftrightarrow H(P) \in Z(X_n) \, .
$$
It is also easy to check that 
\[
f(H(X)) := f(X_0, \ldots, X_{n - 1}, a_0 X_0 + \cdots + a_{n-1}X_{n-1} + X_n)
\]
is a homogeneous polynomial of degree $d$, and 
that the inverse projectivity $H^{-1}$ is the one associated to $h^{*} = 
-a_0X_0 - \cdots - a_{n-1}X_{n-1} + X_n$.
Let $g_h(X) = f(H^{-1}(X))$, then 
the restriction of the projectivity $H$ to $Z_{\mathcal{X}}(f)$  is a 
bijection between $Z_{\mathcal{X}}(f)$ and 
$Z_{\mathcal{X}}(g_h)$ because
%
%
$$
P \in Z_{\mathcal{X}}(f)
 \Longleftrightarrow f(P)= 0 \Longleftrightarrow g_h (H(P)) = 0
\Longleftrightarrow H(P) \in Z_{\mathcal{X}}(g_h)\, ,
$$
which implies that
$H( (Z(h) \cap  \mathcal{X}) \backslash Z_{\mathcal{X}}(f) ) 
= (Z(X_n) \cap \mathcal{X}) \backslash Z_{\mathcal{X}}(g_h)$.

To proceed we consider the following cases, regarding the possibility of $Z_{\mathcal{X}}(f)$ to contain or not a set $\pi \cap \mathcal{X}$, with $\pi \in \Pi$. \newline
{\rm (a)} \label{Sorensen}Assume that $Z_{\mathcal{X}}(f)$ does not contain  any set $\pi \cap \mathcal{X}$,
where $\pi \in \Pi$, and
define the set of pairs
$$
A_f := \{
( P,\pi) \in  \left( \mathcal{X} \backslash Z_{\mathcal{X}}(f) \right) \times 
\Pi \; : \;
P \in \pi \} \, .
$$
Let $\mathcal{X}' = [K_0 \times \cdots \times K_{n-1}]$ and for every $\pi = Z(h)$ let 
\[
g'_{h}(X_0,\ldots , X_{n-1}) = g_{h}(X_0,\ldots , X_{n-1},0).
\]
Since $Z(h) \cap  \mathcal{X} \not\subset Z_{\mathcal{X}}(f) $
we have that $g'_h$ does not vanish on $\mathcal{X}'$
and is homogeneous of degree $d$.
Thus, from $| (Z(X_n) \cap \mathcal{X}) \backslash  Z_{\mathcal{X}}(g_h) | =
|  \mathcal{X}' \backslash  Z_{\mathcal{X}'}(g'_h) |$ and 
the induction hypothesis  we get that 
\begin{equation*}\label{ineq_induction}
| (Z(h) \cap  \mathcal{X}) \backslash Z_{\mathcal{X}}(f)  | \ge
\left(d_{k+1}-\ell\right)d_{k+2}\cdots d_{n-1} \, .
\end{equation*}
So for each $\pi \in \Pi$ we have at least $\left(d_{k+1}-\ell\right)d_{k+2}\cdots d_{n-1}$ points
$P$ such that $(P,\pi) \in A_f$. From $| \Pi | = d_n^n$ we have
\begin{equation} \label{ineq1}
|A_f| \ge 
\left(d_{k+1}-\ell\right)d_{k+2}\cdots d_{n-1} d_n^n \, .
\end{equation}
Let $P=(b_0 : \cdots : b_n) \in \mathcal{X} \backslash Z_{\mathcal{X}}(f)$. If
$(b_0 : \cdots : b_{n-1}) \neq 0$ then there are $d_n^{n-1}$ hyperplanes $\pi \in \Pi$ such that
$P\in \pi$. If $P=(0:\cdots : 0 : 1)$, there is no hyperplane $\pi \in \Pi$ such that $P \in \pi$, so
\begin{equation} \label{ineq2}
|A_f| \le  |  \mathcal{X} \backslash Z_{\mathcal{X}}(f) | d_n^{n-1} \, .
\end{equation}
From \eqref{ineq1} and \eqref{ineq2} we get
$$
|  \mathcal{X} \backslash Z_{\mathcal{X}}(f) | \ge
\left(d_{k+1}-\ell\right)d_{k+2}\cdots d_n \, .
$$
{\rm (b)} Assume that $Z_{\mathcal{X}}(f)$ contains a set $\pi \cap \mathcal{X}$,
for some $\pi \in \Pi$. To complete the proof we will consider two subcases. \\  \\
Subcase b.1: Assume that $d_{k+1} < d_n$. 
Applying the projectivity $H$ corresponding to $\pi$ and passing from $f(X)$ to 
$f(H^{-1}(X))$ we may assume that $\pi = Z(X_n)$. From Lemma \ref{divide}
 there exists a homogeneous polynomial $g$ of degree $d - 1$
 such that
$f - gX_n \in I(\mathcal{X})$, which means $Z_{\mathcal{X}}(f) = Z_{\mathcal{X}} (gX_n)$. For
$\widetilde{\mathcal{X}}:=[1\times K_1 \times \cdots \times K_{n-1}\times K_n^{\neq 0}]$
we have $\mathcal{Y}_n^{*} \setminus Z_{\mathcal{Y}_n^{*}}(f) = \widetilde{\mathcal{X}} \setminus Z_{\widetilde{\mathcal{X}}} (g)$.
As before we may get a lower bound for $\widetilde{\mathcal{X}} \backslash Z_{\widetilde{\mathcal{X}}} (g)$ by using Theorem \ref{3.1e3.8} to obtain a lower bound for the number of nonzero points of $g(1, X_1, \ldots, X_n)$ in 
$K_1 \times \cdots \times K_{n-1}\times K_n^{\neq 0} \in \mathbb{A}^n$. 
To do this we observe that $g(1, X_1, \ldots, X_n)$  is a polynomial of degree at most $d - 1$, and also that 
$d_1 \leq \cdots \leq d_{n - 1}$ and $d_{k + 1} \leq d_n - 1$. Thus when we write  $K_1, \ldots, K_{n - 1}, K_n^{\neq 0}$  in order of increasing size the set $K_n^{\neq 0}$ does not appear before $K_{k + 1}$. In \cite{lopez-villa} the authors prove that this reordering does not affect the lower bound in Theorem \ref{3.1e3.8} (2) so we get 
%
$$
|\widetilde{\mathcal{X}} \backslash Z_{\widetilde{\mathcal{X}}} (g) |  \ge
(d_{k+1}-\ell)d_{k+2} \cdots d_{n-1}(d_n-1) \, .
$$
On the set $\mathcal{Y}_{n-1}$ we can use the induction hypothesis, observing that $d - 1 = \sum\limits_{i=2}^{k+1}\left(d_i-1\right)+\ell + d_1 - d_{k+1}$
and $0<\ell + d_1 - d_{k+1} \le d_{k+2}-1$, so  
\begin{equation}\label{induction}
|\mathcal{Y}_{n-1} \backslash Z_{\mathcal{Y}_{n-1}} (f) |  \ge 
(d_{k+2}-(\ell + d_1 - d_{k+1}))d_{k+3} \cdots d_n \ge
(d_{k+1}-\ell)d_{k+2} \cdots d_{n-1}\, .
\end{equation}
Adding both inequalities, we obtain the desired result. \\ \\
Subcase b.2: Assume that $d_{k+1} = d_n$.
Let $t \in \{1, \ldots, k + 1\}$ be the least index such that $K_t = K_{t+1} = 
\cdots = K_n$. 
For $t \le j \le n$ let
$$
\Pi_j = \{ \pi = Z(h) \subset \mathbb{P}^n \; : \;
h =a_0X_0 + \cdots + a_{j-1}X_{j-1} + X_j + a_{j+1}X_{j+1} +\cdots + a_nX_n \in K_n[X]
\}\, .
$$

If for some $j \in \{t, \ldots, n\}$ all sets $\pi \cap \mathcal{X}$,
with $\pi \in \Pi_j$, are not contained in  $Z_{\mathcal{X}}(f)$ then  we may use an argument similar to the one used in (a) above
to obtain the desired result. In this argument we will use $\Pi_j$ instead of 
$\Pi$, $\mathcal{X}'_j = [K_0 \times \cdots \times \widehat{K_j} \times \cdots 
\times K_n]$ instead of $\mathcal{X}'$ (where $K_0 \times \cdots \times 
\widehat{K_j} \times \cdots \times K_n$ means that we omit the set $K_j$ in the 
product) and for every 
\[
h =a_0X_0 + \cdots + a_{j-1}X_{j-1} + X_j + 
a_{j+1}X_{j+1} +\cdots + a_nX_n \in K_n[X]
\] we will set 
\begin{align*}
g'_{h}&(X_0, \ldots,\widehat{X_j}, \ldots, X_n) \\  =&  f(X_0, \ldots, X_{j - 
1},
-a_0X_0 - \cdots - a_{j-1}X_{j-1} -  a_{j+1}X_{j+1}  -\cdots - a_nX_n, X_{j 
+ 
1}, \ldots, X_n)
\end{align*}
at the end we use that $| \Pi_j | = d_n^n = d_j^n$ to conclude the argument and 
prove the result. \\

If for all $t \le j \le n$ there exists $Z(h_j)=\pi_j \in \Pi_j$ such that
$
\pi_j \cap \mathcal{X} \subset Z_{\mathcal{X}}(f)
$
then let $H$ be the projectivity defined by  
$$
H(x_0,\ldots , x_n) = (x_0:\cdots : x_{t-1}:h_t(x_0,\ldots , x_n): x_{t+1}: \cdots : x_n). 
$$
As before, passing from $f(X)$ to $f(H^{-1}(X))$ we may assume that $Z(X_t) 
 \cap \mathcal{X} \subset Z_{\mathcal{X}}(f)$. If all sets $\pi \cap \mathcal{X}$,
 with $\pi \in \Pi_{t + 1}$, are not contained in  $Z_{\mathcal{X}}(f)$ then  again we may use an argument similar to the one used in (a) above to get the result. If there is some $\pi \in \Pi_{t + 1}$ such that 
$\pi \cap \mathcal{X} \subset Z_{\mathcal{X}}(f)$ then using an appropriate projectivity we may assume that $Z(X_{t + 1}) \cap \mathcal{X} \subset Z_{\mathcal{X}}(f)$ (note that $Z(X_{t}) \cap \mathcal{X} \subset Z_{\mathcal{X}}(f)$ continues to hold). Proceeding in this manner, we either get the result or we get that $Z(X_{j}) \cap \mathcal{X} \subset Z_{\mathcal{X}}(f)$ for all $j = t, \ldots, n$, which we assume from now on.
%
%
%
%
%
From Lemma \ref{divide}, there exists a homogeneous polynomial $g(X)$ of degree $d-(n-t+1)$,  
such that $f=g \cdot X_t \cdots X_n$.
From $f \notin I(\mathcal{Y}_n^{*})$ we get that $g$ is not
zero on the set
$\mathcal{A}=
[1 \times K_1 \times \cdots \times K_t^{*} \times \cdots \times K_n^{*}]$  and also that
$|  \mathcal{Y}_n^{*} \backslash Z_{\mathcal{Y}_n^{*}}(f) |=
|  \mathcal{A} \backslash Z_{\mathcal{A}}(g) |$. 
The number of nonzero points of $g$ in $\mathcal{A}$ is the same of the number 
of nonzero points of $g(1, X_1, \ldots, X_n)$ in $K_1 \times \cdots \times 
K_t^{*} \times \cdots \times K_n^{*} \in \mathbb{A}^n$. Observe that from the 
definition of $t$ we get $d_1 \leq \cdots \leq d_{t - 1} \leq d_t - 1 = \cdots 
= d_n - 1$ so we may apply Theorem \ref{3.1e3.8}, noting that $\deg( g(1, X_1, 
\ldots, X_n) )\leq d - 1 - (n-t)$. To apply that result we write
\begin{equation}\label{aa}
d - 1 - (n-t) = 
\sum\limits_{i=1}^{t-1}\left(d_i-1\right)+
\sum\limits_{i=t}^{k}\left((d_i-1)-1 \right)+
\ell -(n-k-1) = \sum\limits_{i=1}^{\alpha}\left(\tilde{d}_i-1\right)+\tilde{\ell},
\end{equation}
where $\tilde{d_i}$, $0 \le \alpha \le k$ and $\tilde{\ell}$ are defined by
$$
\tilde{d_i}= \left\{
\begin{array}{ll}
d_i          &    \textrm{if } 1 \leq  i < t,\\
d_i -1 \quad & \textrm{if } t \leq i \leq n,
\end{array}
\right.
$$
$$
0\le \tilde{\ell} = \sum\limits_{i=\alpha+1}^{k} \left(\tilde{d}_i-1\right) 
+\ell - (n-k-1) < \tilde{d}_{\alpha+1} - 1 
$$
(we note that if $t = k + 1$ then we omit the term  $\sum\limits_{i=t}^{k}\left((d_i-1)-1 \right)$ in \eqref{aa}).
With this notation, from Theorem \ref{3.1e3.8} we have
$$
|  \mathcal{A} \backslash Z_{\mathcal{A}}(g) |
\ge
(\tilde{d}_{\alpha+1}-\tilde{\ell})\tilde{d}_{\alpha+2}\cdots \tilde{d}_n \, .
$$
Let $a_{\alpha + 1} = d_{\alpha+1} - \tilde{d}_{\alpha+1} + \tilde{\ell}$ and $a_j = d_j - \tilde{d}_j$ for $j = \alpha + 2, \ldots, n - 1$, then 
$$
(\tilde{d}_{\alpha+1}-\tilde{\ell})\tilde{d}_{\alpha+2}\cdots \tilde{d}_{n-1}
= \prod_{i=\alpha+1}^{n-1} (d_i - a_i),
$$ and we have 
\begin{eqnarray}
\sum_{i=\alpha+1}^{n-1} a_i
     & = & 
     (d_{\alpha+1} - \tilde{d}_{\alpha+1}  + \tilde{\ell})
         + \sum_{i=\alpha+2}^{n-1} (d_i - \tilde{d}_i) =
          \tilde{\ell}
           + \sum_{i=\alpha+1}^{n-1} (d_i - \tilde{d}_i) \nonumber \\
     & = & \sum\limits_{i=\alpha+1}^{k} \left(\tilde{d}_i-1\right) 
     +\ell - (n-k-1)
     + \sum_{i=\alpha+1}^{k} (d_i - \tilde{d}_i) + (n-1 - k) \nonumber \\
     & = & \sum\limits_{i=\alpha+1}^{k} \left(d_i-1\right) + \ell   \nonumber.
\end{eqnarray}
Thus, from Lemma \ref{2.1}  we get
$\prod_{i=\alpha+1}^{n-1} (d_i - a_i)
\ge
(d_{k+1} - \ell) d_{k+2} \cdots d_{n-1},$
and a fortiori 
$$
|  \mathcal{A} \backslash Z_{\mathcal{A}}(g) |
\ge
(d_{k+1}-\ell)d_{k+2}\cdots d_{n-1} (d_n-1).
$$
From the induction hypothesis, and similarly
as \eqref{induction}, we have
$$
|\mathcal{Y}_{n-1} \backslash Z_{\mathcal{Y}_{n-1}} (f) | \ge
(d_{k+1}-\ell)d_{k+2} \cdots d_{n-1}\, .
$$
and adding both inequalities we obtain the desired result,
which concludes the proof of the Proposition.
\end{proof}

	
We come to the main result of this section.
\begin{theorem}\label{dist_min_final}  If $\mathcal{X}$ is the projective 
nested product of fields over $K_0,\ldots,K_n,$ then
the minimum distance of $C_{\mathcal{X}}(d)$ is $1$ if $d > 
\sum\limits_{i=1}^{n}\left(d_i-1\right)$. For 
$1 \leq d \leq \sum\limits_{i=1}^{n}\left(d_i-1\right)$,
in the case where $d_1 = \cdots = d_n$, or $1 \leq d < d_{r + 1}$ 
in the case where there exists a positive integer $r$ such that
$d_1= d_r < d_{r+1}$, we have 
$$\delta_{\mathcal{X}}(d)= \left(d_{k+1}-\ell\right)d_{k+2}\cdots d_n,$$
where $0\leq k\leq n-1$ and $0 \leq \ell < d_{k+1}-1$ are the unique integers 
such that 
$
d - 1 = \sum\limits_{i=1}^{k}\left(d_i-1\right)+\ell .
$
\end{theorem}
%
%
%
%
\begin{proof}
It is a consequence of Proposition \ref{dist_min} and Lemma \ref{desigualdade}.
\end{proof}

As a consequence of our main results we  recover the formula for the parameters of Projective Reed-Muller codes.

\begin{corollary}{\rm (\cite[Theorem~1]{sorensen}; \cite[Proposition 12]{idealapp})}\label{sore}
The Projective Reed-Muller code $PC_d(n,q)$ is an
$\left[\left| \mathbb{P}^n \right|,\dim C_{\mathbb{P}^n} (d),\delta_{\mathbb{P}^n}(d)\right]$-code where
\begin{itemize}
\item[\rm (a)] $\left| \mathbb{P}^n \right|=(q^{n+1}-1)/(q-1),$
\item[\rm (b)] $\dim C_{\mathbb{P}^n} (d)=\displaystyle \sum_{j=0}^n\sum_{k=0}^j(-1)^k\binom{j}{k}\binom{j+d-1-kq}{d-1-kq}$ and
\item[\rm (c)]
$$
\delta_{\mathbb{P}^n}(d)=\left\{\hspace{-1mm}
\begin{array}{ll}
\qquad \quad q^n &\mbox{ if } \quad 1=d,\\
\left(q-\ell\right)q^{n-k-1}&\mbox{ if }\quad 1< d\leq n(q-1),\\
\qquad \qquad 1&\mbox{ if } \quad n\left(q-1\right)< d;
\end{array}
\right.
$$
here $0\leq k\leq n-1$ and $1\leq \ell \leq d_{k+1}-1$ are the unique integers such that 
$d=1+k\left(q-1\right)+\ell$.
\end{itemize}
\end{corollary}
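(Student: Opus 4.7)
The plan is to specialize the main results of the paper, namely Theorem \ref{08-09-13} and Theorem \ref{dist_min_final}, to the case $K_0=K_1=\cdots=K_n=\mathbb{F}_q$. Trivially this is a projective nested product of fields in the sense of Definition \ref{proj_nest_fields} (so $\mathcal{X}=\mathbb{P}^n$ and $d_0=d_1=\cdots=d_n=q$), and Remark \ref{26-03-13} confirms that $C_{\mathbb{P}^n}(d)$ is a projective nested cartesian code. From Theorem \ref{08-09-13} the length is $1+\sum_{i=1}^n d_i\cdots d_n=1+q+q^2+\cdots+q^n=(q^{n+1}-1)/(q-1)$, which is (a).

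For (b), I would observe that since all the $d_i$ are equal to $q$, in the formula of Theorem \ref{08-09-13} each inner sum $\sum_{n+1-j\le i_1<\cdots<i_k\le n}\binom{j+d-1-(d_{i_1}+\cdots+d_{i_k})}{d-1-(d_{i_1}+\cdots+d_{i_k})}$ collapses to $\binom{j}{k}\binom{j+d-1-kq}{d-1-kq}$, since there are $\binom{j}{k}$ subsets $\{i_1,\ldots,i_k\}$ of $\{n+1-j,\ldots,n\}$ and all contribute the same binomial. Thus the bracketed expression for each $j$ becomes $\sum_{k=0}^{j}(-1)^k\binom{j}{k}\binom{j+d-1-kq}{d-1-kq}$, and the leading $1$ can be absorbed as the $j=0$ term (which equals $1$), yielding exactly the double sum in (b).

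For (c), I would apply Theorem \ref{dist_min_final} directly. The cases $d=1$ and $d>n(q-1)$ are immediate: for $d=1$ the theorem's parameters are $k=0,\ell=0$, giving $\delta_{\mathbb{P}^n}(1)=q\cdot q^{n-1}=q^n$; for $d>n(q-1)=\sum_{i=1}^n(d_i-1)$ the minimum distance is $1$. The only mild subtlety is reconciling the two parametrizations in the intermediate range $1<d\le n(q-1)$: the corollary writes $d=1+k(q-1)+\ell$ with $1\le\ell\le q-1$, while the theorem writes $d-1=k'(q-1)+\ell'$ with $0\le\ell'\le q-2$. The passage from one to the other is the obvious bijection: if $\ell\le q-2$ set $(k',\ell')=(k,\ell)$, while for the boundary value $\ell=q-1$ set $(k',\ell')=(k+1,0)$. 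In both cases one checks that $(q-\ell)q^{n-k-1}=(q-\ell')q^{n-k'-1}$, so the two expressions for $\delta_{\mathbb{P}^n}(d)$ agree. Since this reconciliation is entirely elementary, there is no serious obstacle in the proof; the whole argument is essentially a bookkeeping exercise.
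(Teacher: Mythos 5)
Your proposal is correct and follows exactly the paper's route: the paper's proof is the one-line observation that the corollary follows from Remark \ref{26-03-13} together with Theorems \ref{08-09-13} and \ref{dist_min_final}, and your specialization to $d_i=q$, including the collapse of the inner sums to $\binom{j}{k}$ equal terms and the reconciliation of the two parametrizations of $(k,\ell)$, is just the bookkeeping the authors leave implicit.
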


\begin{proof} From Example~\ref{26-03-13} and Theorems~\ref{08-09-13} and 
\ref{dist_min_final}
we have the result.
\end{proof}

Now we present a relationship between the parameters of codes defined over a projective nested product of fields and certain affine cartesian codes.

\begin{corollary}
Let $K_0,\ldots, K_n$ be subfields of $K$ such that
$\mathcal{X}=\left[K_0\times K_1\times\cdots\times K_n\right]$ is a projective nested product of fields and let $\mathcal{X}_i^{*}= K_{n+1-i}\times\cdots\times K_n \subset \mathbb{A}^i$, where $i=1\ldots,n$. Set $\mathcal{X}_0^{*}= \{ 1 \}$  
If $$C_\mathcal{X}(d) \quad \text{ is a }\quad
\left[\left|\mathcal{X} \right|,\dim C_\mathcal{X}(d),\delta_\mathcal{X}(d)\right] \text{-code}$$
and $$C_{\mathcal{X}_i^{*}}(d) \quad \text{ is a }\quad
\left[\left| \mathcal{X}_i^{*}\right|,\dim C_{\mathcal{X}_i^{*}}(d),\delta_{\mathcal{X}_i^{*}}(d)\right] \text{-code},$$ then
$$
\left| \mathcal{X}\right|=\sum_{i=0}^n\left|{\mathcal{X}_i^{*}}\right|, \qquad \dim C_{\mathcal{X}}(d)=\sum_{i=0}^n \dim C_{\mathcal{X}_i^{*}}(d-1)\qquad
\text{ and } \qquad \delta_{\mathcal{X}}(d)=\delta_{\mathcal{X}_n^{*}}(d-1),
$$
where $\mathcal{X}_0^{*}=\left[1\right]$ and $\delta_{\mathcal{X}_n^{*}}(0):=d_1\cdots d_n$, with the restriction that if there exists an integer $r$ such that
$d_1= \cdots =d_r < d_{r+1}$, then $d < d_{r+1}$.
\end{corollary}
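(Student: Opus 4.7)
The plan is to assemble the three identities directly from results already established in the paper: the formulas for $|\mathcal{X}|$, $\dim C_{\mathcal{X}}(d)$ and $\delta_{\mathcal{X}}(d)$ proved in Theorems~\ref{08-09-13} and \ref{dist_min_final}, together with the dimension and minimum distance formulas for affine cartesian codes recalled in Theorem~\ref{3.1e3.8}.

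For the length, Theorem~\ref{08-09-13} gives $|\mathcal{X}| = 1 + \sum_{i=1}^n d_i \cdots d_n$. Since $|\mathcal{X}_j^*| = d_{n+1-j}\cdots d_n$ for $j \geq 1$ and $|\mathcal{X}_0^*| = 1$, the substitution $j = n+1-i$ in the summation index yields $|\mathcal{X}| = \sum_{j=0}^n |\mathcal{X}_j^*|$.

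For the dimension, I would iterate Lemma~\ref{16-05-13}: its proof applies verbatim to any projective nested cartesian set, and each $\mathcal{X}_j$ with $j \geq 1$ is of this type in a smaller polynomial ring. Exactly as in the proof of Lemma~\ref{hil-fun}, the iteration yields
\[
H_{\mathcal{X}}(d) \;=\; H_{\mathcal{X}_0}(d) \,+\, \sum_{j=1}^n H_{\mathcal{X}_j^*}(d-1),
\]
with $H_{\mathcal{X}_0}(d) = 1$ and, on the right, $\mathcal{X}_j^*$ denoting the projective set $[1 \times K_{n+1-j} \times \cdots \times K_n]$. The standard dehomogenization map $F(X_0,\ldots,X_j) \mapsto F(1,X_1,\ldots,X_j)$ provides a $K$-linear isomorphism between $K[X_0,\ldots,X_j]_{d-1}/I([1 \times K_{n+1-j}\times\cdots\times K_n])_{d-1}$ and $K[X_1,\ldots,X_j]_{\leq d-1}/I(K_{n+1-j}\times\cdots\times K_n)_{\leq d-1}$, so $H_{\mathcal{X}_j^*}(d-1) = \dim C_{\mathcal{X}_j^*}(d-1)$, with $\mathcal{X}_j^*$ now the affine set of the statement. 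Together with $H_{\mathcal{X}_0}(d) = \dim C_{\mathcal{X}_0^*}(d-1) = 1$, the claimed identity for $\dim C_{\mathcal{X}}(d)$ follows.

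For the minimum distance, it suffices to compare the formulas in Theorem~\ref{dist_min_final} for $\delta_{\mathcal{X}}(d)$ with those in Theorem~\ref{3.1e3.8}(2) for $\delta_{\mathcal{X}_n^*}(d-1)$: writing $d-1 = \sum_{i=1}^k(d_i-1) + \ell$ with $0 \leq \ell < d_{k+1}-1$, both expressions equal $(d_{k+1}-\ell)d_{k+2}\cdots d_n$. The boundary cases match as well: if $d > \sum_{i=1}^n(d_i-1)$ both are equal to $1$, and when $d = 1$ (so $k = \ell = 0$) one recovers the stated convention $\delta_{\mathcal{X}_n^*}(0) = d_1 \cdots d_n$. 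There is no serious obstacle here; the only point requiring attention is the identification of the projective Hilbert function of $[1 \times \mathcal{Y}]$ in degree $d-1$ with the affine Hilbert function of $\mathcal{Y}$ in degrees $\leq d-1$, a standard projective/affine duality already implicit in the proof of Lemma~\ref{16-05-13}.
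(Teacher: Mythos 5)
Your proposal is correct and follows essentially the same route as the paper, which deduces the corollary directly from Theorems~\ref{08-09-13} and \ref{dist_min_final} together with the affine cartesian code parameters of \cite{lopez-villa}; your extra detail (iterating Lemma~\ref{16-05-13} and identifying $H_{[1\times\mathcal{Y}]}(d-1)$ with the affine dimension via dehomogenization) is exactly the mechanism already underlying Lemma~\ref{hil-fun} and Theorem~\ref{08-09-13}.
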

\begin{proof}
It is a consequence of Theorems~\ref{08-09-13} and \ref{dist_min_final}
and {\cite[Corollary~3.8]{lopez-villa}}.
\end{proof}
\begin{example} Let $K=\mathbb{F}_{25}$ be a finite field with $25$ elements and let
$K_0=K_1=\mathbb{F}_5, K_2=\mathbb{F}_{25}$ be subsets of $K$. Then
$\mathcal{X}=\left[K_0\times K_1\times K_2\right]$ is a projective nested cartesian product, and
the length, the dimension and the minimum distance of the code $C_{\mathcal{X}}(d)$ are:
\begin{eqnarray*}
&&\left.
\begin{array}{c|c|c|c|c|c|c|c|c|c|c|c}
 d & 1 & 2 & 3 &4 &5 & 6 & 7 & 8 & 9 & 10 & 25\\ \hline
|\mathcal{X}|
& 151 & 151 & 151 & 151 & 151 & 151 & 151 & 151 & 151 & 151 & 151 \\ \hline
\dim C_{\mathcal{X}}(d)
& 3 & 6 & 10 & 15 & 21 & 27 & 33 & 39 & 45 & 51 & 141\\ \hline
\delta_{\mathcal{X}}(d) & 125 & 100 & 75 & 50 & 25 & 24 & 23 & 22 & 21 & 20 & 1
\end{array}\right.
\end{eqnarray*}
Observe that  for $d=25$, we have 
$d-1 = (5-1)+20$ and for
$$
f=X_2( X_2^{24} - (X_0^{24} + X_1^{24}+ 2 X_0^{4}X_1^{20} + 2 X_0^{20}X_1^{4})),
$$
we
have $w(f) = 1 < (d_{k+1}-\ell) = (25 - 20)=5$.
\end{example}

\end{document}